\newtheorem{theorem}{Theorem}[section]
\newtheorem{lemma}[theorem]{Lemma}
\newtheorem{corollary}[theorem]{Corollary} 
\newtheorem{proposition}[theorem]{Proposition}
\newtheorem{intthm}{Theorem}
\theoremstyle{definition}
\newtheorem{definition}[theorem]{{Definition}}
\newtheorem{example}[theorem]{Example}
\newtheorem{remark}[theorem]{Remark}
\newtheorem{assumption}[theorem]{Assumption}
\newtheorem{para}[theorem]{}
\newtheorem*{chunk*}{}
\numberwithin{equation}{theorem}
\def\p{{\mathfrak p}}
\def\q{{\mathfrak q}}
\def\m{{\mathfrak m}}
\def\Hom{\operatorname{Hom}}
\def\ker{\operatorname{Ker}}
\def\im{\operatorname{Im}}
\newcommand{\Spec}{\operatorname{Spec}}
\def\Cl{\operatorname{Cl}}
\def\UFD{\operatorname{UFD}}
\def\p{\mathfrak{p}}
\newcommand{\ol}{\overline}
\newcommand{\Ht}{\operatorname{ht}}
\newcommand{\pd}{\operatorname{pd}}
\newcommand{\fp}{\mathfrak p}
\newcommand{\n}{\mathfrak n}
\newcommand{\fa}{\mathfrak a}
\newcommand{\id}{\operatorname{id}}
\newcommand{\dual}[2]{#1^{#2}}
\newcommand{\ddual}[2]{#1^{#2#2}}
\newcommand{\dddual}[2]{#1^{#2#2#2}}
\newcommand{\pdual}[2]{(#1)^{#2}}
\newcommand{\pddual}[2]{(#1)^{#2#2}}
\newcommand{\pdddual}[2]{(#1)^{#2#2#2}}
\newcommand{\bddual}[2]{[#1]^{#2#2}}
\newcommand{\bidual}[2]{\sigma^{#1}_{#2}}
\newcommand{\cl}[1]{[#1]}
\newcommand{\fb}{\mathfrak b}
\newcommand{\vf}{\varphi}
\renewcommand{\ker}{\operatorname{Ker}}
\newcommand{\xra}{\xrightarrow}
\begin{document}
\bibliographystyle{amsplain}

\subjclass[2000]{Primary 13C20; Secondary  13B22, 13F40}

\keywords{Divisor class group, excellent rings,
normal integral domains, ring homomorphisms of finite flat dimension}

\title[Induced Maps on Divisor Class Groups]
{Maps on Divisor Class Groups Induced by Ring Homomorphisms of Finite
Flat Dimension}

\author{Sean Sather-Wagstaff}
\address{Sean Sather-Wagstaff, Department of Mathematics,
NDSU Dept \# 2750,
PO Box 6050,
Fargo, ND 58108-6050,
USA }
\email{Sean.Sather-Wagstaff@ndsu.edu}
\urladdr{http://math.ndsu.nodak.edu/faculty/ssatherw/}

\author{Sandra Spiroff}
\address{University of Mississippi
Department of Mathematics
305 Hume Hall
P.O. Box 1848
University , MS 38677-1848 }
\email{spiroff@olemiss.edu}
\urladdr{http://home.olemiss.edu/\~{}spiroff/}

\begin{abstract} 
Let $\vf\colon A\to B$ be a ring homomorphism between Noetherian normal integral domains.
We establish a general criterion for $\vf$ to induce a homomorphism 
$\Cl(\vf)\colon\Cl(A)\to\Cl(B)$
on divisor class groups. For instance, this criterion
applies whenever $\vf$ has finite flat dimension; this special case generalizes
the more classical situations where $\vf$ is flat or is surjective with kernel generated
by an $A$-regular element. We 
extend some of Spiroff's work on the kernels of 
induced maps to this more general  setting. 
\end{abstract}

\date \today
\maketitle

\section*{Introduction}

The divisor class group of a Noetherian normal integral domain $A$,
denoted $\Cl(A)$, measures certain aspects of the factorization-theory of $A$.
For instance, it is well-known that $A$ is a unique factorization domain
if and only if $\Cl(A)$ is trivial. For definitions and notation, consult the beginning
of Section~\ref{sec04}. 

It is natural to investigate the transfer of such factorization properties
between rings that are connected by a ring homomorphism. 
As part of such an investigation, one should find nontrivial classes of ring homomorphisms
$\vf\colon A\to B$ of Noetherian normal integral domains
that induce group homomorphisms $\Cl(\vf)\colon \Cl(A)\to \Cl(B)$.
For instance, the flat ring homomorphisms have this property.
Danilov~\cite[Prop.\ 1.1]{D1} shows that the natural surjection 
$B[\![T]\!]\to B$ also has this property, and Lipman~\cite[\S 0]{L} extended this
to any surjection of the form $A\to A/fA$, assuming that $A$ and $A/fA$
are both Noetherian normal integral domains.

In this paper, we establish a general criterion guaranteeing that a ring homomorphism
$\vf\colon A\to B$ of Noetherian normal integral domains induces a 
group homomorphism $\Cl(\vf)\colon \Cl(A)\to \Cl(B)$; see
Theorem~\ref{prop0101}. 
This was established, with extra assumptions, by Spiroff in her dissertation~\cite{spiroff:diss},
but the details cannot be found in
any of the current literature.   
We include a complete proof here because this criterion has been used without proof
by several authors, including Griffith and Weston~\cite{GW},
Sather-Wagstaff~\cite{SSW},
and Spiroff~\cite{Sp}, and because we require the general statement  
for our own investigation.
As a special case of this we have the following result,
proved in~\eqref{proofA},
which shows that our criterion encompasses the cases described in the previous paragraph:

\begin{intthm}  \label{prop0102}
Let $\vf\colon A \to B$ be a ring homomorphism of finite flat dimension
between Noetherian normal integral domains.
Then there is a well-defined group homomorphism
$\Cl(\vf)\colon\Cl(A)\to\Cl(B)$ given by
$\cl\fa\mapsto \cl{\pddual{\fa\otimes_A B}{B}}$,
where $\pddual{-}{B}$ is the double-dual with respect to $B$.
\end{intthm}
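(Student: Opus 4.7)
The plan is to derive Theorem~\ref{prop0102} as an application of the general criterion of Theorem~\ref{prop0101}: it suffices to verify that a ring homomorphism $\vf\colon A\to B$ of finite flat dimension between Noetherian normal integral domains satisfies the hypotheses of that criterion. Some formal observations come first. For any divisorial ideal $\fa\subseteq A$, one checks that $\fa\otimes_A B$ has generic $B$-rank one, so $\pddual{\fa\otimes_A B}{B}$ is a rank-one reflexive $B$-module and represents a class in $\Cl(B)$. In particular, when $\fa=Af$ is principal with $f\ne 0$, the isomorphism $Af\cong A$ of $A$-modules gives $\fa\otimes_A B\cong B$, whose class in $\Cl(B)$ is trivial.

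The finite-flat-dimension hypothesis enters essentially through localization at height-one primes of $B$. For such a prime $\q$ with $\fp=\vf^{-1}(\q)$, the induced local homomorphism $A_\fp\to B_\q$ inherits finite flat dimension, and $B_\q$ is a DVR because $B$ is normal. Consequently $\Tor_i^{A_\fp}(M,B_\q)=0$ for $i\gg 0$ and every finitely generated $A_\fp$-module $M$, and each higher $\Tor$-module $\Tor_i^{A_\fp}(A_\fp/\fa_\fp,B_\q)$ with $i\ge 1$ is $B_\q$-torsion, since it vanishes after tensoring with the fraction field of $A_\fp$. These torsion contributions are precisely what the operation $\pddual{-}{B}$ annihilates, so at each height-one prime $\q$ the class of $\pddual{\fa\otimes_A B}{B}$ depends only on the $\q$-adic valuation of the extended ideal $\fa B$. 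This is exactly the local control needed to invoke Theorem~\ref{prop0101}.

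The principal obstacle I foresee is the verification of additivity. One must show that $\pddual{(\fa\fb)^{vv}\otimes_A B}{B}$ and $\pddual{\pddual{\fa\otimes_A B}{B}\cdot\pddual{\fb\otimes_A B}{B}}{B}$ represent the same class in $\Cl(B)$, which requires comparing two sequences of double-dualizations---one over $A$, one over $B$---interspersed with the tensor operation $-\otimes_A B$. The torsion-killing principle sketched above provides local control at each height-one prime of $B$, but stitching these local comparisons together, globalizing the conclusion, and matching them to the precise hypotheses of Theorem~\ref{prop0101} is where the detailed bookkeeping will be most delicate.
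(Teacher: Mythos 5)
Your high-level plan is correct: Theorem~\ref{prop0102} should indeed be deduced from Theorem~\ref{prop0101}. But you never actually verify the hypothesis of Theorem~\ref{prop0101}, which is the only thing that needs to be done. That hypothesis says: for each prime $P\in\Spec(B)$ with $\Ht(P)\leq 1$, the contracted prime $\vf^{-1}(P)$ lies in $\UFD(A)$. Your proposal correctly notes that for such a $P$ with $\fp=\vf^{-1}(P)$, the localization $B_P$ is a DVR (hence regular) and the local map $A_\fp\to B_P$ still has finite flat dimension, but you then veer off into a discussion of higher $\Tor$-modules and how the double-dual kills torsion. That line of reasoning never produces the needed conclusion that $A_\fp$ is a UFD, and it cannot: the point is not to control $\Tor$-contributions but to descend regularity from $B_P$ to $A_\fp$.

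The missing ingredient is a theorem of Avramov, Foxby, and Halperin on descent of local properties along homomorphisms of finite flat dimension: if $A_\fp\to B_P$ has finite flat dimension and $B_P$ is regular, then $A_\fp$ is regular, hence a UFD, so $\fp\in\UFD(A)$. With that one citation, the proof is complete. Your final paragraph worrying about verifying additivity of $\Cl(\vf)$ is also off track --- additivity is part of the conclusion of Theorem~\ref{prop0101} and is proved once and for all there (via Lemmas~\ref{lem0103} and~\ref{lem0104}); there is no need to re-derive any of that here. In short, you identified the right framework but substituted a digression about torsion for the single decisive fact (regularity descends) that actually closes the argument.
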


The injectivity of the map $\Cl(\vf)$ has been studied by several authors, including
Danilov~\cite{D3, D2, D1}, Griffith and Weston~\cite{GW}, and Spiroff~\cite{Sp}. 
The work in~\cite{Sp} is guided by
the following principle: When $A$ is a local Noetherian normal 
integral domain, the pathological behavior exhibited by maps of the form 
$\Cl(\vf)\colon\Cl(A)\to\Cl(A/I_n)$
lies near the ``top'' of the maximal ideal.
In the current paper, we extend this idea to the more general setting
of induced maps guaranteed to exist by Theorem~\ref{prop0102}.
The outcome is the next result which is proved in~\eqref{proofB}.

\begin{intthm} \label{thm0201}
Let $(A, \m, k$) be an excellent normal local integral
domain and let $I_1, I_2, \dots$ be a sequence of 
nonzero prime ideals in $A$ of finite projective dimension
such that the $\lim_{n \to \infty} I_n = 0$ in the $\m$-adic
topology and such that $A/I_n$ is a normal integral domain for each $n$.
If $\pi_n\colon A\to A/I_n$ is the natural surjection, 
then $\bigcap_{n=1}^\infty \ker(\Cl(\pi_n))$ is trivial. 
\end{intthm}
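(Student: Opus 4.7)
Let $\fa \subseteq A$ be a divisorial (rank-one reflexive) ideal with $[\fa] \in \bigcap_n \ker(\Cl(\pi_n))$; the goal is to show $[\fa] = 0$, equivalently that $\fa$ is cyclic (and hence principal, $\fa = xA$ for some $x \in A$).

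I first reduce to the case that $A$ is complete. Since $A$ is excellent, $\hat A$ and each $\widehat{A/I_n} \cong \hat A/I_n\hat A$ are normal local domains, the condition $\lim I_n = 0$ passes to $\lim I_n\hat A = 0$ in the $\hat\m$-adic topology, and finite flat dimension is preserved under the flat extension $A \to \hat A$. Naturality of the construction in Theorem~\ref{prop0102} then places $[\fa\hat A]$ in the analogous intersection of kernels over $\hat A$. If I prove $\fa\hat A$ is cyclic over $\hat A$, then faithful flatness of $A \to \hat A$ together with the equality of residue fields gives $\mu_A(\fa) = \mu_{\hat A}(\fa\hat A) = 1$, so Nakayama yields $\fa = xA$ for some $x \in A$. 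Assume henceforth $A = \hat A$.

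For each $n$, Theorem~\ref{prop0102} produces a divisorial generator $\bar x_n$ of $((\fa + I_n)/I_n)^{**_{A/I_n}}$, unique up to a unit of $A/I_n$. Lifting $\bar x_n$ to $x_n \in A$ and using the inclusion $M \hookrightarrow M^{**}$ for the torsion-free $M = (\fa+I_n)/I_n$, I obtain $\fa \subseteq x_n A + I_n$ for every $n$. The plan is to exploit the unit-freedom in the choice of $\bar x_n$ to adjust the lifts so that the resulting sequence $(x_n)$ is $\m$-adic Cauchy; by completeness its limit $x \in A$ then satisfies $\fa \subseteq xA + I_n$ for all $n$, and since $I_n \subseteq \m^{N_n}$ with $N_n \to \infty$, Krull's intersection theorem applied to the finitely generated $A$-module $A/xA$ gives $\fa \subseteq \bigcap_n(xA + I_n) \subseteq \bigcap_N(xA + \m^N) = xA$. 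The reverse inclusion follows from the matching of divisors of $\fa$ and $xA$ in each $A/I_n$ combined with the following avoidance observation: for any fixed height-one prime $\p$ of $A$, one has $I_n \not\subseteq \p$ for $n$ large, since $I_n \subseteq \p$ with $\p$ height-one and $I_n$ a nonzero prime forces $I_n = \p$, which is incompatible with $I_n \subseteq \m^N$ for $N$ exceeding any fixed bound on $\p$ (by Krull's intersection theorem).

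\textbf{Main obstacle.} The technical heart of the argument is arranging the lifts $(x_n)$ to form a Cauchy sequence via unit adjustment. Because the $I_n$'s are not nested, $\bar x_n$ and $\bar x_m$ live in incomparable quotients, and any direct comparison must go through the common quotient $A/(I_n + I_m)$, which is typically not normal—so the standard ``divisorial generator unique up to units'' principle does not apply there verbatim. The strategy is to exploit the finite projective dimension of $I_n$ (enabling a Tor-based comparison of $x_nA$ and $x_mA$ modulo $I_n + I_m$) in combination with $\m$-adic convergence to control the discrepancy between $\bar x_n$ and $\bar x_m$, thereby extracting a convergent subsequence after unit adjustment. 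Making this bookkeeping precise, likely via Artin--Rees and careful analysis of the derived tensor product, is expected to be the delicate step.
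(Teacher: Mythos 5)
Your reduction to the complete case is sound and matches the paper's reduction in substance, though the paper argues via injectivity of $\Cl(A)\to\Cl(\widehat A)$ for flat local maps rather than via minimal generators; both work. The problem is what comes after. You yourself flag the core difficulty --- choosing the lifts $x_n$ so that, after unit adjustment, $(x_n)$ becomes $\m$-adically Cauchy --- and you offer only a speculative strategy (``exploit finite projective dimension... Tor-based comparison... extracting a convergent subsequence''), with no actual argument. This is a genuine gap, not a routine bookkeeping step: the ideals $I_n$ are not nested, the quotients $A/(I_n+I_m)$ need not be domains (let alone normal), and there is no obvious mechanism to compare $\bar x_n$ and $\bar x_m$ or to control the ``unit ambiguity'' across $n$. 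Without a concrete resolution of this step, the proof does not go through.

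The paper avoids this entirely by packaging all the quotients into a single object. It forms $P=\prod_n A/I_n$ and $S=\coprod_n A/I_n$ and works with the $A$-module $P/S$. The key inputs are: $A\to P/S$ splits when $A$ is complete (Proposition~\ref{AdsumP/S}); the hypothesis $[\fa]\in\bigcap_n\ker\Cl(\pi_n)$ gives $\Hom_A(\fa,A/I_n)\cong A/I_n$ for all $n$, whence $\Hom_A(\fa,P/S)\cong P/S$ by purity of $S\subseteq P$ (Lemma~\ref{SPpure}, Proposition~\ref{P/S}); the quotient $\overline S/S$ is the maximal $\m$-divisible submodule of $P/S$ (Lemma~\ref{mdiv}); and $P/\overline S$ is flat (Proposition~\ref{fflat}). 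Chaining these, $\dual{\fa}{A}$ is exhibited as a direct summand of the flat module $P/\overline S$, hence is free, hence $[\fa]$ is trivial. This aggregate, purity-and-flatness argument is precisely what lets one bypass the generator-comparison problem you got stuck on; I'd encourage you to study it, since it is the genuinely nontrivial idea in the proof.
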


Here we summarize the contents of this paper. Section~\ref{sec04}
begins with requisite background information and contains the proof of our
criterion for the existence of induced homomorphisms of divisor class groups. 
It concludes with a discussion of the functorial behavior of our construction.
Section~\ref{sec02} contains an investigation of the injectivity of our induced maps.
We end the paper with Section~\ref{sec03}, which contains examples
of ideals satisfying the hypotheses of Theorem~\ref{thm0201}.

\textbf{Assumptions.}
In this paper, the term ``normal integral domain'' is short
for ``Noetherian integral domain that is integrally closed in its
field of fractions''. We assume without reference 
results from the texts of Matsumura \cite{M} and Nagata \cite{N}.

\section{Divisor Class Groups and Induced Homomorphisms} \label{sec04}

In this section, we show how certain homomorphisms of normal integral 
domains induce
homomorphisms on the corresponding divisor class groups.
We begin with 
our working definition of the divisor class group of a  normal integral domain.
It  can be found in Lipman~\cite[\S 0]{L} and is 
equivalent to the classical additive definition of the divisor class 
group appearing in Bourbaki~\cite[Ch.\ VII]{B} and Fossum~\cite[\S 6]{F};
see, e.g., Sather-Wagstaff~\cite[2.10]{SSW} for a discussion of this equivalence.  

\begin{definition} \label{defn0101}
Let $A$ be a  normal integral domain and $M$ a
finitely generated $A$-module .  The \emph{dual} of $M$ is 
$\dual{M}{A}=\Hom_A(M, A)$ and the \emph{double dual} of $M$ is 
$\ddual{M}{A} = \pdual{\dual{M}{A}}{A}$.  The natural \emph{biduality map}
$\bidual{A}{M}\colon M \to \ddual{M}{A}$ is the $A$-module homomorphism 
given by $\bidual{A}{M}(m)(g)=g(m)$  for all $m \in M$ and all $g \in \dual{M}{A}$.  
We say that $M$ is \emph{reflexive} if $\bidual{A}{M}$ is an isomorphism.  
\end{definition}

\begin{remark} \label{rmk0101}
In much of the literature, the dual of an $A$-module $M$ is denoted $M^{\ast}$.
We have chosen the notation $\dual{M}{A}$ in order to avoid ambiguity when we work with
two or more rings simultaneously, as in the proof of Lemma~\ref{lem0104}.
\end{remark}
 
\begin{definition} \label{defn0102}
Let $A$ be a  normal integral domain.
The \emph{divisor class group} of $A$, denoted $\Cl(A)$, is the group 
of isomorphism classes of reflexive $A$-modules of rank one, or equivalently, reflexive ideals of $A$.  
An element $\cl\fa \in \Cl(A)$ is called a \emph{divisor class}, and multiplication is
defined by $\cl\fa \cdot \cl\fb = \cl{\pddual{\fa \otimes_A \fb}{A}}$.
The identity element is $\cl A$, and 
$\cl{\fa}^{-1}=\cl{\dual{\fa}{A}}$.
\end{definition}

\begin{remark} \label{functor} 
Let $A$ be a  normal integral domain.
For each  homomorphism of finitely generated $A$-modules $\psi\colon M\to N$,
the functoriality of $\Hom_A(-,A)$ yields $A$-module homomorphisms 
$\dual{\psi}{A}\colon \dual{N}{A}\to \dual{M}{A}$ and
$\ddual{\psi}{A}\colon \ddual{M}{A}\to \ddual{N}{A}$.
Also, the naturality of the biduality map manifests itself in
the next commutative diagram.
$$\xymatrix{
M \ar[r]^{\psi} \ar[d]_{\bidual{A}{M}}
& N \ar[d]^{\bidual{A}{N}} \\
\ddual{M}{A} \ar[r]^{\ddual{\psi}{A}}
&\ddual{N}{A}
}$$
\end{remark}

We  use the following result of Auslander and Buchsbaum in several key places.

\begin{lemma}[\protect{\cite[Prop.\ 3.4]{AB}}] \label{AG} Let $A$ be 
a  normal integral domain.  If $M$ 
is a reflexive $A$-module, $N$ is a torsion-free $A$-module, and 
$\psi\colon M \to N$ is an $A$-module homomorphism, then $\psi$ is an isomorphism if
and only if, for each prime ideal $\p\in\Spec(A)$ of height at most one, the induced homomorphism 
$\psi_{\p}\colon M_{\p}\to N_{\p}$
is an isomorphism.
\end{lemma}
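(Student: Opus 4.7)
The forward implication is immediate, since localization preserves isomorphisms. For the converse, the plan is to verify injectivity and surjectivity of $\psi$ separately.

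For injectivity, I would observe that $K = \ker(\psi) \subseteq M$ is torsion-free, because $M$ is reflexive and hence isomorphic to the $A$-dual $\ddual{M}{A}$, and any $A$-dual is torsion-free over the domain $A$. The hypothesis at the generic prime $(0)$, which has height zero, gives $K_{(0)} = 0$, and any torsion-free module over a domain injects into its localization at $(0)$; hence $K = 0$.

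For surjectivity, let $C = \coker(\psi)$. Localizing the exact sequence $0 \to M \to N \to C \to 0$ at any prime $\p$ with $\Ht \p \leq 1$ yields $C_\p = 0$, so every associated prime of $C$ has height at least two. Arguing by contradiction, suppose $\p \in \Ass_A(C)$, necessarily with $\Ht \p \geq 2$, and write $\n = \p A_\p$. The long exact sequence in local cohomology
$$\H^0_\n(M_\p) \to \H^0_\n(N_\p) \to \H^0_\n(C_\p) \to \H^1_\n(M_\p)$$
will, once the outer two terms are known to vanish, give $0 \neq \H^0_\n(C_\p) \cong \H^0_\n(N_\p)$, forcing $\p \in \Ass_A(N)$. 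But $N$ is torsion-free over the domain $A$, so $\Ass_A(N) \subseteq \{(0)\}$, contradicting $\Ht \p \geq 2$.

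The main obstacle is therefore the depth bound $\operatorname{depth}_\n M_\p \geq 2$, where both hypotheses enter essentially. Normality of $A$ gives Serre's condition $S_2$ on $A_\p$, so $\operatorname{depth} A_\p \geq 2$ (using $\dim A_\p \geq 2$); and writing $M \cong \Hom_A(\dual{M}{A}, A)$ and dualizing a free presentation $F_1 \to F_0 \to \dual{M}{A} \to 0$ exhibits $M$ as the kernel of a map $\dual{F_0}{A} \to \dual{F_1}{A}$ between duals of free modules, i.e.\ a second syzygy. Applying the depth lemma to $0 \to M \to \dual{F_0}{A} \to \dual{F_0}{A}/M \to 0$ and noting that $\dual{F_0}{A}/M$ embeds in $\dual{F_1}{A}$ and hence is torsion-free of depth $\geq 1$ at $\p$ (by the same argument as in the injectivity step), one deduces $\operatorname{depth}_\n M_\p \geq 2$ after localizing at $\p$.
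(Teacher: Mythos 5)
The paper cites this result directly from \cite[Prop.\ 3.4]{AB} without supplying a proof, so there is no internal argument to compare against; I assess your proof on its own.

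Your proof is correct and is essentially the standard one. Injectivity is right: the kernel is a torsion-free submodule of $M$ that vanishes at $(0)$, hence vanishes. For surjectivity, the key input is that a reflexive module over a normal domain satisfies Serre's condition $(S_2)$, i.e.\ $\operatorname{depth}_{A_\p}M_\p\geq\min(2,\dim A_\p)$, and your derivation via the second-syzygy presentation $0\to M\to\dual{F_0}{A}\to\dual{F_1}{A}$ together with the depth lemma and the $(S_2)$ property of $A$ is the usual argument. A few remarks. First, the local cohomology machinery is heavier than needed: the depth lemma applied directly to $0\to M_\p\to N_\p\to C_\p\to 0$ gives $\operatorname{depth}C_\p\geq\min(\operatorname{depth}M_\p-1,\operatorname{depth}N_\p)\geq 1$, which already contradicts $\p\in\Ass(C)$. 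Second, the parenthetical ``by the same argument as in the injectivity step'' overstates the resemblance: there you use torsion-freeness plus vanishing at $(0)$ to conclude a module is zero, whereas here you need only that torsion-freeness forces $\Ass\subseteq\{(0)\}$ and hence positive depth at any $\p$ of height $\geq 1$; both rest on the same basic observation but the conclusions drawn differ. Third, you should dispose of the degenerate cases: if $(\dual{F_0}{A}/M)_\p=0$ then $M_\p$ is free and the depth bound is immediate, and if $N_\p=0$ then $C_\p=0$ and there is nothing to contradict. Finally, for comparison, a proof of surjectivity avoiding depth altogether identifies $M\otimes_AK$ with $N\otimes_AK$ via $\psi_{(0)}$ and uses the reflexive-module characterization $M=\bigcap_{\Ht\p\leq1}M_\p=\bigcap_{\Ht\p\leq1}N_\p\supseteq N$, which combined with the inclusion $M\subseteq N$ gives $M=N$.
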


The next three lemmas are for use in the proof of Theorem~\ref{prop0101}.

\begin{lemma}  \label{lem0102}
Let $\vf\colon A \to B$ be a ring homomorphism of integral domains 
and set $\q=\ker(\vf)$. 
If $M$ is a finitely generated $A$-module such that $M_{\q}\cong A_{\q}^r$, then
$M\otimes_AB$ is a finitely generated $B$-module of rank $r$.
\end{lemma}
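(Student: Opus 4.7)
The plan is to verify finite generation directly and then compute the rank by passing to the fraction field of $B$.

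First, since $M$ is a finitely generated $A$-module, $M \otimes_A B$ is automatically a finitely generated $B$-module (generators of $M$ over $A$ give generators of $M \otimes_A B$ over $B$), so the only nontrivial content is the rank computation.

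Next, let $K$ denote the fraction field of $B$. By definition, the rank of $M \otimes_A B$ as a $B$-module is the $K$-dimension of
\[
(M \otimes_A B) \otimes_B K \cong M \otimes_A K.
\]
I would then observe that because $B$ is an integral domain and $\q = \ker(\vf)$, the composition $A \xra{\vf} B \hookrightarrow K$ has kernel exactly $\q$, and every element of $A \setminus \q$ maps to a nonzero element of $B$, hence to a unit of $K$. Therefore the map $A \to K$ factors uniquely through the localization $A_{\q}$, and we get a canonical isomorphism
\[
M \otimes_A K \cong M_{\q} \otimes_{A_\q} K.
\]

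Combining this with the hypothesis $M_\q \cong A_\q^r$ yields $M \otimes_A K \cong A_\q^r \otimes_{A_\q} K \cong K^r$, so the rank is $r$ as claimed. The only step requiring any care is the factorization through $A_\q$, and this is immediate from the universal property of localization once one checks that $A \setminus \q$ lands in the units of $K$; there is no real obstacle here, which is presumably why the authors present this as a preparatory lemma rather than a substantive result.
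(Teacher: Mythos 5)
Your proof is correct and takes essentially the same route as the paper: both reduce the rank computation to the fraction field of $B$, observe that $A \to B \hookrightarrow L$ factors through $A_\q$ because $\q = \ker(\vf)$ and $B$ is a domain, and then use $M_\q \cong A_\q^r$ to conclude. The paper packages this factorization into an explicit commutative diagram, but the underlying argument is identical.
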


\begin{proof}
We need to show that 
$(M \otimes_A B)\otimes_BL\cong L^r$ where $L$ is the field of fractions of $B$.
Using the natural commutative diagram of ring homomorphisms
$$
\xymatrix{
A \ar@{->>}[r] \ar@{^(->}[d] 
& A/\q \ar@{^(->}[r]^-{\vf'}\ar@{^(->}[d]
& B \ar@{^(->}[d] \\
A_{\q}\ar@{->>}[r]
& (A/\q)_{\q} \ar@{^(->}[r]^-{(\vf')_{\q}}
& B_{\q} \ar@{^(->}[r]
&L
}$$
we have the following isomorphisms
$$(M\otimes_AB)\otimes_BL
\cong (M\otimes_AA_{\q})\otimes_{A_{\q}}L
\cong M_{\q}\otimes_{A_{\q}}L
\cong A_{\q}^r\otimes_{A_{\q}}L
\cong L^r
$$
which yield the desired conclusion.
\end{proof}

\begin{lemma} \label{lem0103}
Let $B$ be a normal integral domain, and let $M$ and $N$ be finitely
generated $B$-modules. Assume that, for each prime ideal $P\in\Spec(B)$
of height at most one, the localizations $M_P$ and $N_P$ are reflexive $B_P$-modules.
Then there is an isomorphism
$\pddual{M\otimes_BN}{B}\cong
\pddual{\ddual{M}{B}\otimes_B\ddual{N}{B}}{B}$.
\end{lemma}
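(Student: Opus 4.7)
The plan is to construct a natural comparison map between the two double-duals using the biduality maps, and then verify that this map is an isomorphism by invoking Lemma~\ref{AG} after localizing at each prime of height at most one.

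First I would form the tensor product of biduality maps
$$\psi := \bidual{B}{M}\otimes_B \bidual{B}{N}\colon M\otimes_B N \longrightarrow \ddual{M}{B}\otimes_B\ddual{N}{B},$$
and then apply the double-dual functor $\pdual{-}{BB}$ (which is functorial by Remark~\ref{functor}) to produce a $B$-linear map
$$\Psi := \ddual{\psi}{B} \colon \pddual{M\otimes_B N}{B} \longrightarrow \pddual{\ddual{M}{B}\otimes_B \ddual{N}{B}}{B}.$$
Since $B$ is a normal integral domain and both $M\otimes_B N$ and $\ddual{M}{B}\otimes_B\ddual{N}{B}$ are finitely generated, the source and target of $\Psi$ are double-duals and hence reflexive, in particular torsion-free. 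This places us in the setup of Lemma~\ref{AG}, reducing the problem to checking that $\Psi_P$ is an isomorphism for every $P\in\Spec(B)$ with $\Ht(P)\le 1$.

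At such a prime $P$, the next step is to use that for finitely generated modules the formation of $\pdual{-}{B}$, $\pdual{-}{BB}$, the biduality map, and the tensor product all commute with localization at $P$. Thus $\Psi_P$ is naturally identified with the map obtained by running the same construction over $B_P$ starting from $M_P$ and $N_P$, namely the double-dual of $\bidual{B_P}{M_P}\otimes_{B_P}\bidual{B_P}{N_P}$. By hypothesis $M_P$ and $N_P$ are reflexive $B_P$-modules, so the biduality maps $\bidual{B_P}{M_P}$ and $\bidual{B_P}{N_P}$ are already isomorphisms; their tensor product is then an isomorphism, as is its double-dual. Hence $\Psi_P$ is an isomorphism, and Lemma~\ref{AG} promotes this to an isomorphism of $\Psi$ itself.

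The only real obstacle I anticipate is the bookkeeping in the second step: verifying carefully that each of $\pdual{-}{B}$, $\pdual{-}{BB}$, $\otimes_B$, and $\bidual{B}{-}$ commutes with localization at a prime $P$, and that the target of $\Psi$ is torsion-free so that Lemma~\ref{AG} genuinely applies. Once these points are pinned down, the lemma becomes a straightforward reflexive-module argument with Lemma~\ref{AG} doing the essential work.
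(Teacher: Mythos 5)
Your proposal is correct and takes essentially the same approach as the paper: you construct the comparison map $\pddual{\bidual{B}{M}\otimes_B\bidual{B}{N}}{B}$, reduce to checking it localizes to an isomorphism at primes of height at most one via Lemma~\ref{AG} (using that double-duals are reflexive over a normal domain), and then exploit the hypothesized reflexivity of $M_P$, $N_P$ to see the localized biduality maps are isomorphisms. The ``bookkeeping'' you flag at the end---that dualization, double-dualization, tensor, and the biduality map commute with localization for finitely generated modules over a Noetherian ring---is precisely what the paper's chain of commutative diagrams carries out explicitly, and it is routine; so there is no gap.
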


\begin{proof}
Fix a prime ideal $P\in\Spec(B)$
of height at most one. 
By assumption $M_P$ is a reflexive $B_P$-module. Using the natural commutative
diagram 
$$\xymatrix{
M_P\ar[r]^-{\bidual{B_P}{M_P}}_-{\cong} \ar[rd]_-{(\bidual{B}{M})_P}
& \pddual{M_P}{B_P} \ar[d]^{\cong} \\
&(\ddual{M}{B})_P
}$$
it follows that the map $(\bidual{B}{M})_P$ is an isomorphism.
Similarly, we deduce that the map $(\bidual{B}{N})_P$ is an isomorphism.
From this we conclude that the tensor product
$$(\bidual{B}{M})_P\otimes_{B_P}(\bidual{B}{N})_P\colon
M_P\otimes_{B_P}N_P\to\ddual{M_P}{B_P}\otimes_{B_P}\ddual{N_P}{B_P}$$
is an isomorphism.
Using the commutative diagram
$$\xymatrix{
M_P\otimes_{B_P}N_P
\ar[rrr]^-{(\bidual{B}{M})_P\otimes_{B_P}(\bidual{B}{N})_P}_-{\cong} 
\ar[d]_{\cong}
&&&\ddual{M_P}{B_P}\otimes_{B_P}\ddual{N_P}{B_P} \ar[d]^{\cong} \\
(M\otimes_BN)_P
\ar[rrr]^{(\bidual{B}{M}\otimes_B\bidual{B}{N})_P}
&&& (\ddual{M}{B}\otimes_B\ddual{N}{B})_P
}$$
it follows that the map $(\bidual{B}{M}\otimes_B\bidual{B}{N})_P$
is an isomorphism.
From this, we conclude that the top row of the next commutative diagram
is an isomorphism
$$\xymatrix{
\bddual{(M\otimes_BN)_P}{B_P}
\ar[rrr]^-{\bddual{(\bidual{B}{M}\otimes_B\bidual{B}{N})_P}{B_P}}_-{\cong}
\ar[d]_{\cong}
&&& \bddual{(\ddual{M}{B}\otimes_B\ddual{N}{B})_P}{B_P}\ar[d]^{\cong}\\
[\pddual{M\otimes_BN}{B}]_P
\ar[rrr]^-{[\pddual{\bidual{B}{M}\otimes_B\bidual{B}{N}}{B}]_P}
&&&[\pddual{\ddual{M}{B}\otimes_B\ddual{N}{B}}{B}]_P
}$$
and so the map $[\pddual{\bidual{B}{M}\otimes_B\bidual{B}{N}}{B}]_P$
is an isomorphism. 

Since the ring $B$ is a normal integral domain and the modules 
$\pddual{\ddual{M}{B}\otimes_B\ddual{N}{B}}{B}$
and
$\pddual{M\otimes_BN}{B}$ 
are each of the form $\Hom_B(U,B)$ for some finitely generated $B$-module $U$,
each of these $B$-modules is reflexive.
Since the homomorphism 
$$\pddual{\bidual{B}{M}\otimes_B\bidual{B}{N}}{B}\colon
\pddual{M\otimes_BN}{B}\to \pddual{\ddual{M}{B}\otimes_B\ddual{N}{B}}{B}
$$
localizes to an isomorphism at each prime ideal of height at most one,
Lemma~\ref{AG} implies that it is an isomorphism,
yielding the desired conclusion.
\end{proof}

\begin{lemma} \label{lem0104}
Let $\vf\colon A\to B$ be a ring homomorphism of integral domains
such that $B$ is normal.
Let $M$ be a finitely
generated $A$-module such that, for each prime ideal $P\in\Spec(B)$ of height at most one,
the $A_{\p}$-module $M_{\p}$ is reflexive, where $\p=\vf^{-1}(P)$.
Then there is an isomorphism
$\pddual{M\otimes_AB}{B}\cong
\pddual{\ddual{M}{A}\otimes_AB}{B}$.
\end{lemma}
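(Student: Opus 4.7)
The plan is to construct a natural candidate map and then verify it is an isomorphism via Lemma~\ref{AG}, paralleling the strategy used in Lemma~\ref{lem0103}. Applying $-\otimes_A B$ to the biduality map $\bidual{A}{M}\colon M\to \ddual{M}{A}$ and then taking $B$-double duals produces a natural $B$-linear map
$$\Phi=\pddual{\bidual{A}{M}\otimes_A B}{B}\colon \pddual{M\otimes_A B}{B}\to \pddual{\ddual{M}{A}\otimes_A B}{B}.$$
Both source and target have the form $\Hom_B(-,B)$, hence are reflexive $B$-modules, so Lemma~\ref{AG} reduces the task to checking that $\Phi_P$ is an isomorphism for each prime $P\in\Spec(B)$ with $\Ht(P)\le 1$.

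Fix such a $P$ and set $\p=\vf^{-1}(P)$. Since $A$ is Noetherian, every finitely generated $A$-module is finitely presented, so $\Hom_A(-,A)$ commutes with localization; together with the compatibility of tensor products with localization, this yields natural identifications $(M\otimes_A B)_P\cong M_{\p}\otimes_{A_{\p}} B_P$ and $(\ddual{M}{A}\otimes_A B)_P\cong \ddual{M_{\p}}{A_{\p}}\otimes_{A_{\p}} B_P$, and likewise $[\pddual{N}{B}]_P\cong \bddual{N_P}{B_P}$ for any finitely generated $B$-module $N$. Under these identifications, $\Phi_P$ is the map obtained from the $A_{\p}$-biduality homomorphism $\bidual{A_{\p}}{M_{\p}}\colon M_{\p}\to \ddual{M_{\p}}{A_{\p}}$ by first tensoring with $B_P$ over $A_{\p}$ and then taking $B_P$-double duals. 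By hypothesis $M_{\p}$ is $A_{\p}$-reflexive, so $\bidual{A_{\p}}{M_{\p}}$ is an isomorphism; since both $-\otimes_{A_{\p}}B_P$ and $\bddual{-}{B_P}$ preserve isomorphisms, $\Phi_P$ is an isomorphism as well.

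The principal obstacle is naturality bookkeeping: one must verify that the localization isomorphisms for $\Hom$, $\otimes$, and their iterates fit into commutative squares with the biduality maps, so that the identification of $\Phi_P$ with the induced $A_{\p}$-biduality map is genuinely canonical. Once those commutativities are recorded, the reflexivity hypothesis on $M_{\p}$ immediately delivers the conclusion, without any further assumption on flatness of $\vf$ or normality of $A$.
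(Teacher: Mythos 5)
Your proposal is correct and matches the paper's own argument: you construct the same candidate map $\pddual{\bidual{A}{M}\otimes_AB}{B}$, reduce to height-at-most-one primes of $B$ via Lemma~\ref{AG} (using that source and target are of the form $\Hom_B(-,B)$, hence reflexive), and then use the hypothesis that $M_{\p}$ is $A_{\p}$-reflexive together with the standard localization compatibilities for $\Hom$ and $\otimes$ to see that the localized map is an isomorphism. The commutative diagram you describe as "naturality bookkeeping" is precisely the diagram the paper writes out explicitly.
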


\begin{proof}
As in the proof of Lemma~\ref{lem0103}, it suffices to construct a $B$-module
homomorphism $\psi\colon \pddual{M\otimes_AB}{B}\to
\pddual{\ddual{M}{A}\otimes_AB}{B}$ such that, for each
prime ideal $P\in\Spec(B)$ of height at most one, the localization
$\psi_P$ is an isomorphism. We show that the map
$\psi=\pddual{\bidual{A}{M}\otimes_AB}{B}$ satisfies the desired property.

Fix a prime ideal $P\in\Spec(B)$ of height at most one, and set  $\p=\vf^{-1}(P)$.
The $A_{\p}$-module $M_{\p}$ is  reflexive by assumption,
so the biduality map $\bidual{A_{\p}}{M_{\p}}\colon M_{\p}\to
\pddual{M_{\p}}{A_{\p}}$ is an isomorphism.
It follows that the induced map
$$\pddual{\bidual{A_{\p}}{M_{\p}}\otimes_{A_{\p}}B_P}{B_P}
\colon
\pddual{M_{\p}\otimes_{A_{\p}}B_P}{B_P}
\to
\pddual{\pddual{M_{\p}}{A_{\p}}\otimes_{A_{\p}}B_P}{B_P}$$
is an isomorphism as well.
From the natural commutative diagram
$$\xymatrix{
\pddual{M_{\p}\otimes_{A_{\p}}B_P}{B_P}
\ar[rrr]^-{\pddual{\bidual{A_{\p}}{M_{\p}}\otimes_{A_{\p}}B_P}{B_P}}_{\cong}
\ar[d]_{\cong}
&&& \pddual{\pddual{M_{\p}}{A_{\p}}\otimes_{A_{\p}}B_P}{B_P} 
\ar[d]^{\cong} \\
[\pddual{M\otimes_AB}{B}]_P
\ar[rrr]^-{[\pddual{\bidual{A}{M}\otimes_AB}{B}]_P}
&&&[\pddual{\ddual{M}{A}\otimes_AB}{B}]_P
}$$
it follows that $[\pddual{\bidual{A}{M}\otimes_AB}{B}]_P$ is an isomorphism
as desired.
\end{proof}

The next definition is our final preparation for
Theorem~\ref{prop0101}.

\begin{definition} \label{defn0112}
For each normal integral domain $A$, we set
$$\UFD(A) = \{\fp \in \Spec(A) \mid \text{$A_{\fp}$ is a unique factorization domain}\}.$$
\end{definition}

We now state and prove  the main result of this section.

\begin{theorem}  \label{prop0101}
Let $\vf\colon A \to B$ be a ring homomorphism of  normal integral domains 
such that, for each prime ideal $P\in\Spec(B)$ of height at most one,
we have $\vf^{-1}(P)\in\UFD(A)$.
Then there is a well-defined group homomorphism
$\Cl(\vf)\colon\Cl(A)\to\Cl(B)$ given by
$\cl\fa\mapsto \cl{\pddual{\fa\otimes_A B}{B}}$.
\end{theorem}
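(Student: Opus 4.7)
The plan is to verify three things: that $\pddual{\fa\otimes_A B}{B}$ defines an element of $\Cl(B)$, that this class depends only on $\cl\fa$, and that the resulting map respects the group operation.

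For the first point, any $B$-module of the form $\Hom_B(-,B)$ is automatically reflexive, so I only need to check that $\pddual{\fa\otimes_A B}{B}$ has rank one as a $B$-module. I would apply Lemma~\ref{lem0102} with $\q=\ker(\vf)=\vf^{-1}(0)$: the hypothesis of the theorem, applied to the prime ideal $P=0\in\Spec(B)$, gives $\q\in\UFD(A)$, so $A_\q$ is a UFD and $\Cl(A_\q)$ is trivial. A reflexive rank-one module over a UFD is free of rank one, so $\fa_\q\cong A_\q$, and Lemma~\ref{lem0102} delivers that $\fa\otimes_A B$ has rank one as a $B$-module, hence so does its double dual. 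Well-definedness on isomorphism classes is immediate from the functoriality of $-\otimes_A B$ and of double-dualization.

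The substantive step is multiplicativity: for $\cl\fa,\cl\fb\in\Cl(A)$ I need an isomorphism
$$\pddual{\pddual{\fa\otimes_A\fb}{A}\otimes_A B}{B}\;\cong\;\pddual{\pddual{\fa\otimes_A B}{B}\otimes_B\pddual{\fb\otimes_A B}{B}}{B}.$$
My strategy is to bridge the two sides through the intermediate module $\pddual{(\fa\otimes_A\fb)\otimes_A B}{B}$. On the one hand, Lemma~\ref{lem0104} applied to $M=\fa\otimes_A\fb$ identifies the left side with the intermediate, provided $(\fa\otimes_A\fb)_\fp$ is reflexive over $A_\fp$ for every $\fp=\vf^{-1}(P)$ with $P$ of height at most one in $B$; this follows from the hypothesis $\fp\in\UFD(A)$, because then $\fa_\fp$ and $\fb_\fp$ are each free of rank one over $A_\fp$, so their tensor product is free and hence reflexive. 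On the other hand, after rewriting $(\fa\otimes_A\fb)\otimes_A B\cong(\fa\otimes_A B)\otimes_B(\fb\otimes_A B)$ by associativity of tensor product, Lemma~\ref{lem0103} applied to these two factors identifies the intermediate with the right side; the required reflexivity of each factor at every height-one prime $P$ of $B$ follows from the same UFD hypothesis, since after localizing one has $(\fa\otimes_A B)_P\cong\fa_\fp\otimes_{A_\fp}B_P\cong B_P$, and similarly for $\fb$.

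Since the identity $\cl A\in\Cl(A)$ maps to $\cl{\pddual{A\otimes_AB}{B}}=\cl{\pddual{B}{B}}=\cl B$, the identity of $\Cl(B)$, and since inverses in an abelian group are automatically preserved by a multiplicative identity-preserving map, the proof will be complete. The main obstacle is the bookkeeping required to confirm that the hypothesis $\vf^{-1}(P)\in\UFD(A)$ propagates correctly to yield the reflexivity needed to invoke Lemmas~\ref{lem0103} and~\ref{lem0104}; once one isolates the observation that the UFD condition trivializes the relevant local pieces of $\fa$ and $\fb$, the rest is a formal chain of natural isomorphisms.
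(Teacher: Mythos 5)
Your proposal is correct and follows essentially the same route as the paper: well-definedness via Lemma~\ref{lem0102} and the fact that $\Hom_B(-,B)$ is automatically reflexive, then multiplicativity by bridging through $\pddual{(\fa\otimes_A\fb)\otimes_AB}{B}$ using Lemma~\ref{lem0104} on one side and Lemma~\ref{lem0103} (after tensor associativity) on the other, with the $\UFD$ hypothesis trivializing the localizations of $\fa$ and $\fb$ at the relevant primes. Your explicit observation that the hypothesis applied to $P=0$ yields $\fa_\q\cong A_\q$, which is the precondition for Lemma~\ref{lem0102}, is a small detail the paper leaves implicit and is a welcome clarification.
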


\begin{proof}
Let $K$ and $L$ denote the fields of fractions of $A$ and $B$, respectively.  
Set $\q = \ker(\vf$), which is a prime ideal of $A$.   


To show that $\Cl(\vf)$ is well-defined,
it suffices to fix
a reflexive $A$-module $\fa$ of rank one and verify
that $\pddual{\fa \otimes_A B}{B}$
is a reflexive $B$-module of rank one.  
The reflexivity follows from the fact that $B$ is normal and $\pddual{\fa \otimes_A B}{B}$
is of the form $\Hom_B(U,B)$ for some  finitely generated $B$-module $U$.
Lemma~\ref{lem0102} shows that the $B$-module $\fa\otimes_AB$ has rank one,
that is, there is an isomorphism $(\fa \otimes_A B)\otimes_BL\cong L$.
This yields the second isomorphism in the next sequence
$$\pddual{\fa \otimes_A B}{B}\otimes_BL\cong
\pddual{(\fa \otimes_A B)\otimes_BL}{L}\cong
\pddual{L}{L}\cong L.$$
The first isomorphism follows from the fact that $L$ is flat as a $B$-module,
since $\fa\otimes_AB$ is  finitely generated.
The third isomorphism
is routine. 
This shows that $\pddual{\fa \otimes_A B}{B}$
has rank one, 
and it follows that $\Cl(\vf)$ is well-defined.

To complete the proof, we  show that $\Cl(\vf)$ respects the group 
structures of $\Cl(A)$ and $\Cl(B)$.  
To this end, fix two rank-one reflexive $A$-modules $\fa$ and $\fb$.
The cancellation isomorphism 
$(\fa\otimes_AB)\otimes_B(\fb\otimes_AB)\cong(\fa\otimes_A\fb)\otimes_AB$
yields the unlabeled isomorphism in the following sequence: 
\begin{align*}
\bddual{\pddual{\fa\otimes_A\fb}{A}\otimes_A B}{B}
&\stackrel{(1)}{\cong} \bddual{(\fa\otimes_A\fb)\otimes_A B}{B} \\
&\cong \bddual{(\fa\otimes_AB)\otimes_B(\fb\otimes_AB)}{B} \\
&\stackrel{(2)}{\cong}\bddual{\pddual{\fa\otimes_AB}{B}\otimes_B\pddual{\fb\otimes_AB}{B}}{B}.
\end{align*}

To justify isomorphism (1), we  show that the hypotheses of
Lemma~\ref{lem0104} are satisfied with the module $M=\fa\otimes_A\fb$.
Let $P\in\Spec(B)$ be a prime ideal of height at most one, and set $\p=\vf^{-1}(P)$.
By assumption, the group $\Cl(A_{\p})$ is trivial.
The fact that $\fa_{\p}$ and $\fb_{\p}$ are reflexive $A_{\p}$-modules of
rank one implies $\fa_{\p}\cong A_{\p}\cong\fb_{\p}$ and so
$$(\fa\otimes_A\fb)_{\p}\cong\fa_{\p}\otimes_{A_{\p}}\fb_{\p}
\cong A_{\p}\otimes_{A_{\p}} A_{\p}\cong A_{\p}.$$
In particular, this is a reflexive $A_{\p}$-module, so Lemma~\ref{lem0104}
yields isomorphism (1).

For isomorphism (2), we  show that the hypotheses of
Lemma~\ref{lem0103} are satisfied with the module $M=\fa\otimes_AB$ and $N=\fb\otimes_AB$.
It suffices (by symmetry) to show that, for each  prime ideal $P\in\Spec(B)$ of height at most one, 
the $B_P$-module $(\fa\otimes_AB)_P$ is reflexive.  
Set $\p=\vf^{-1}(P)$.
In the following sequence of isomorphisms
$$(\fa\otimes_AB)_P
\cong (\fa\otimes_AB)\otimes_BB_P
\cong (\fa\otimes_AA_{\p})\otimes_{A_{\p}}B_P
\cong \fa_{\p}\otimes_{A_{\p}}B_P
\cong A_{\p}\otimes_{A_{\p}}B_P
\cong B_P$$
the second isomorphism comes from the following commutative diagram of ring homomorphisms
$$\xymatrix{
A \ar[r]^-{\vf} \ar[d] & B \ar[d] \\
A_{\p} \ar[r]^-{\vf_P} & B_P}$$
and the remaining isomorphisms are straightforward.
The $B_P$-module $(\fa\otimes_AB)_P\cong B_P$ is reflexive,
thus completing the proof.
\end{proof}

We next show how Theorem~\ref{prop0102} follows from Theorem~\ref{prop0101}.

\begin{para} \label{proofA}
\textbf{Proof of Theorem~\ref{prop0102}.}
Let $P\in\Spec(B)$ be a prime ideal of height at most one, and set $\fp=\vf^{-1}(P)$.
The localized homomorphism $\vf_P\colon A_{\fp}\to B_P$ has finite flat 
dimension.  Since $B$ is normal and $\operatorname{ht}(P)\leq 1$,
the ring $B_P$ is regular.  It follows from~\cite[Thm.\ 6.1.1]{AFHa} that
$A_{\fp}$ is regular, and hence $\fp\in\UFD(A)$.
Hence, the desired result follows from Theorem~\ref{prop0101}.
\qed
\end{para}

\begin{remark} \label{rmk0102}
Theorem~\ref{prop0102} unifies two classical situations
of induced maps on divisor class groups. The first situation is
when $\vf$ is flat, and the second situation is when $\vf$ is surjective
with kernel generated by an $A$-regular element; see~\cite[\S 0]{L}. Indeed, in each
of these situations, the map $\vf$ has finite flat dimension. 
Another classical case is covered by Theorem~\ref{prop0101}, namely,
the case where $\vf$ is injective and integral. In this case, the Cohen-Seidenberg Theorems imply that
$\Ht(P)=\Ht(\vf^{-1}(P))$ for every prime ideal of $B$. In particular,
when $\Ht(P)\leq1$ we have $\Ht(\vf^{-1}(P))\leq1$; this implies that $A_{\vf^{-1}(P)}$
is regular and hence $\vf^{-1}(P)\in\UFD(A)$.
Corollary~\ref{cor0101} shows how
to combine these classical situations.
\end{remark}

\begin{remark} \label{rmk0103}
Continue with the notation of Theorem~\ref{prop0101}.
For each divisor class $\cl\fa\in\Cl(A)$, we have
$$\Cl(\vf)(\cl\fa^{-1})
=\Cl(\vf)(\cl\fa)^{-1}
=\cl{\pddual{\fa\otimes_AB}{B}}^{-1}
=\cl{\pdddual{\fa\otimes_AB}{B}}
=\cl{\pdual{\fa\otimes_AB}{B}}.$$
The first equality follows from the fact that $\Cl(\vf)$ is a
homomorphism of multiplicative groups. The second and third equalities
are by definition. 
To verify the fourth equality, it suffices to show that
the biduality morphism
$$\bidual{B}{\pdual{\fa\otimes_AB}{B}}\colon
\pdual{\fa\otimes_AB}{B}\to\pdddual{\fa\otimes_AB}{B}$$
is an isomorphism. As in the proof of Lemma~\ref{lem0104},
it is straightforward to show that, for each prime ideal $P\in\Spec(B)$
of height at most one,
the induced map 
$$(\bidual{B}{\pdual{\fa\otimes_AB}{B}})_{P}\colon
(\pdual{\fa\otimes_AB}{B})_P\to(\pdddual{\fa\otimes_AB}{B})_P$$
is an isomorphism. 
The $B$-modules $\pdual{\fa\otimes_AB}{B}$ and $\pdddual{\fa\otimes_AB}{B}$
are both reflexive because
$B$ is a normal integral domain and each of the modules
is of the form $\Hom_B(U,B)$ for some finitely generated
$B$-module $U$.
Hence, Lemma~\ref{AG} implies that $\bidual{B}{\pdual{\fa\otimes_AB}{B}}$
is an isomorphism.
\end{remark}

In the final results of this section,  we analyze the functoriality of the operator $\Cl(-)$.

\begin{theorem}  \label{thm0101}
Let $\vf\colon A \to B$ and $\psi\colon B\to C$ 
be  ring homomorphisms of  normal integral domains. 
If each of the maps $\vf$, $\psi$ and $\psi\circ\vf$ satisfies
the hypotheses of Theorem~\ref{prop0101},
then $\Cl(\psi\circ\vf)=\Cl(\psi)\circ\Cl(\vf)$.
\end{theorem}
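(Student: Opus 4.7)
The plan is to fix a reflexive rank-one $A$-module $\fa$ and exhibit an isomorphism between the two reflexive rank-one $C$-modules $\pddual{\fa\otimes_A C}{C}$ and $\pddual{\pddual{\fa\otimes_A B}{B}\otimes_B C}{C}$, which represent $\Cl(\psi\circ\vf)(\cl\fa)$ and $\Cl(\psi)(\Cl(\vf)(\cl\fa))$ respectively. Since these are reflexive rank-one $C$-modules, an isomorphism of $C$-modules will translate directly into an equality of divisor classes.

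The first step is purely formal: the associativity of tensor product gives a cancellation isomorphism $\fa\otimes_A C\cong(\fa\otimes_A B)\otimes_B C$ of $C$-modules, and hence
$$\pddual{\fa\otimes_A C}{C}\cong\pddual{(\fa\otimes_A B)\otimes_B C}{C}.$$
It therefore remains to establish
$$\pddual{(\fa\otimes_A B)\otimes_B C}{C}\cong\pddual{\pddual{\fa\otimes_A B}{B}\otimes_B C}{C},$$
which is precisely the conclusion of Lemma~\ref{lem0104} applied to the ring homomorphism $\psi\colon B\to C$ and the $B$-module $M=\fa\otimes_A B$.

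The crux of the argument is verifying the hypothesis of Lemma~\ref{lem0104} in this setting: for every prime $Q\in\Spec(C)$ of height at most one, the localization $(\fa\otimes_A B)_P$ must be reflexive as a $B_P$-module, where $P=\psi^{-1}(Q)$. This is where the assumption on the composite $\psi\circ\vf$ is used. Setting $\fp=\vf^{-1}(P)=(\psi\circ\vf)^{-1}(Q)$, the hypothesis on $\psi\circ\vf$ delivers $\fp\in\UFD(A)$, so that $A_{\fp}$ is a unique factorization domain. Since $\fa_{\fp}$ is a reflexive rank-one module over the UFD $A_{\fp}$, it must be isomorphic to $A_{\fp}$ itself. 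A standard base-change calculation (identical in spirit to the display at the end of the proof of Theorem~\ref{prop0101}) then yields $(\fa\otimes_A B)_P\cong\fa_{\fp}\otimes_{A_{\fp}}B_P\cong B_P$, which is free, and in particular reflexive.

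The main obstacle is conceptual rather than computational: one must recognize that the hypothesis on the composite $\psi\circ\vf$, rather than some combination of the hypotheses on $\vf$ and $\psi$ separately, is what feeds into Lemma~\ref{lem0104}. The individual hypotheses on $\vf$ and $\psi$ enter only at the outer level, to ensure that $\Cl(\vf)$ and $\Cl(\psi)$ are well-defined maps.
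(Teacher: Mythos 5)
Your argument is correct and follows the same underlying mechanics as the paper's proof: both reduce to checking, at each height-at-most-one prime $Q\in\Spec(C)$, that $(\fa\otimes_AB)_P$ is a free $B_P$-module by tracing the contraction all the way back to $\fp=(\psi\circ\vf)^{-1}(Q)\in\UFD(A)$. The one organizational difference is that you package the key isomorphism $\pddual{(\fa\otimes_AB)\otimes_BC}{C}\cong\pddual{\pddual{\fa\otimes_AB}{B}\otimes_BC}{C}$ as a direct application of Lemma~\ref{lem0104} (applied to $\psi$ and $M=\fa\otimes_AB$), whereas the paper constructs the explicit map $\delta=(\bidual{B}{\fa\otimes_AB}\otimes_BC)\circ\Upsilon$ and re-verifies by hand, via Lemma~\ref{AG}, that $\ddual{\delta}{C}$ localizes to an isomorphism; your route is a genuine streamlining that reuses the paper's own toolkit rather than repeating it. You also correctly isolate the role of the three hypotheses: the condition on $\psi\circ\vf$ is the one that drives the local freeness, while the conditions on $\vf$ and $\psi$ are only needed so that $\Cl(\vf)$ and $\Cl(\psi)$ individually make sense as maps.
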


\begin{proof}
\newcommand{\fP}{\mathfrak{P}}
Fix an element $\cl\fa\in\Cl(A)$. To show that
$\Cl(\psi\circ\vf)(\cl\fa)=\Cl(\psi)(\Cl(\vf)(\cl\fa))$,
we need to exhibit an isomorphism of $C$-modules
$$\pddual{\fa\otimes_AC}{C}
\cong\bddual{\pddual{\fa\otimes_AB}{B}\otimes_BC}{C}.$$
We start with the biduality map
$$\bidual{B}{\fa\otimes_AB}\colon\fa\otimes_AB\to\pddual{\fa\otimes_AB}{B}$$
and apply the functor $-\otimes_BC$ to yield the $C$-module homomorphism
$$\bidual{B}{\fa\otimes_AB}\otimes_BC\colon(\fa\otimes_AB)\otimes_BC
\to\pddual{\fa\otimes_AB}{B}\otimes_BC.$$
Let $\Upsilon\colon \fa\otimes_AC\to(\fa\otimes_AB)\otimes_BC$
denote the natural tensor-cancellation isomorphism and set
$$\delta=(\bidual{B}{\fa\otimes_AB}\otimes_BC)\circ \Upsilon\colon\fa\otimes_AC\to
\pddual{\fa\otimes_AB}{B}\otimes_BC.$$
We claim that the double-dual
$$\ddual{\delta}{C}\colon \pddual{\fa\otimes_AC}{C}
\to\bddual{\pddual{\fa\otimes_AB}{B}\otimes_BC}{C}$$
is an isomorphism. Lemma~\ref{AG} says that we need only show that the induced
map $(\ddual{\delta}{C})_{\fP}$ is an isomorphism for each prime ideal
$\fP\in\Spec(C)$ such that $\Ht(\fP)\leq1$. Fix such a prime $\fP$ and set
$P=\psi^{-1}(\fP)$ and $\fp=\vf^{-1}(P)$.

Our assumptions imply that $\p\in\UFD(A)$, and so 
there is an $A_{\p}$-module isomorphism $\fa_{\p}\cong A_{\p}$.
It follows that we have a $B_P$-module isomorphism 
$\fa_{\p}\otimes_{A_{\p}}B_P\cong B_P$.
In particular, this $B_P$-module is reflexive,
so the map
$$\bidual{B_P}{\fa_{\p}\otimes_{A_{\p}}B_P}\colon
\fa_{\p}\otimes_{A_{\p}}B_P\to\pddual{\fa_{\p}\otimes_{A_{\p}}B_P}{B_P}$$
is an isomorphism. From the following commutative diagram
$$\xymatrix{
(\fa\otimes_AB)_P \ar[rr]^-{(\bidual{B}{\fa\otimes_AB})_P} \ar[d]_{\cong}
&&(\pddual{\fa\otimes_AB}{B})_P\ar[d]^{\cong} \\
\fa_{\p}\otimes_{A_{\p}}B_P\ar[rr]^-{\bidual{B_P}{\fa_{\p}\otimes_{A_{\p}}B_P}}_-{\cong}
&&\pddual{\fa_{\p}\otimes_{A_{\p}}B_P}{B_P}
}$$
we conclude that  $(\bidual{B}{\fa\otimes_AB})_P$ is 
also an isomorphism.
Hence, the induced map
$$(\bidual{B}{\fa\otimes_AB})_P\otimes_{B_P}C_{\fP}\colon
(\fa\otimes_AB)_P\otimes_{B_P}C_{\fP}\to[\pddual{\fa\otimes_AB}{B}]_P\otimes_{B_P}C_{\fP}$$
is an isomorphism as well. Therefore, the next commutative diagram
$$\xymatrix{
[(\fa\otimes_AB)\otimes_BC]_{\fP} \ar[rrr]^-{(\bidual{B}{\fa\otimes_AB}\otimes_BC)_{\fP}} \ar[d]_{\cong}
&&&[\pddual{\fa\otimes_AB}{B}\otimes_BC]_{\fP}\ar[d]^{\cong} \\
(\fa\otimes_AB)_P\otimes_{B_P}C_{\fP}
	\ar[rrr]^-{(\bidual{B}{\fa\otimes_AB})_P\otimes_{B_P}C_{\fP}}_-{\cong}
&&&[\pddual{\fa\otimes_AB}{B}]_P\otimes_{B_P}C_{\fP}
}$$
shows that the map $(\bidual{B}{\fa\otimes_AB}\otimes_BC)_{\fP}$ is an isomorphism.

Since $\Upsilon$ is an isomorphism, the same is true of $\Upsilon_{\fP}$.
It follows that the map
$$\delta_{\fP}=[(\bidual{B}{\fa\otimes_AB}\otimes_BC)\circ \Upsilon]_{\fP}
=(\bidual{B}{\fa\otimes_AB}\otimes_BC)_{\fP}\circ \Upsilon_{\fP}
$$
is an isomorphism as well. From this, we conclude that the double-dual
$$\pddual{\delta_{\fP}}{C_{\fP}}
\colon\bddual{(\fa\otimes_AC)_{\fP}}{C_{\fP}}\to
\bddual{(\pddual{\fa\otimes_AB}{B}\otimes_BC)_{\fP}}{C_{\fP}}
$$
is an isomorphism. Finally, the commutative diagram
$$\xymatrix{
[\pddual{\fa\otimes_AC}{C}]_{\fP} 
	\ar[rrr]^-{(\ddual{\delta}{C})_{\fP}}
	\ar[d]_{\cong} 
&&& [\bddual{\pddual{\fa\otimes_AB}{B}\otimes_BC}{C}]_{\fP}
	\ar[d]_{\cong} \\
\bddual{(\fa\otimes_AC)_{\fP}}{C_{\fP}}
	\ar[rrr]^-{\pddual{\delta_{\fP}}{C_{\fP}}}_-{\cong}
&&& \bddual{(\pddual{\fa\otimes_AB}{B}\otimes_BC)_{\fP}}{C_{\fP}}
}$$
shows that $(\ddual{\delta}{C})_{\fP}$ is an isomorphism, as desired.
\end{proof}

\begin{corollary} \label{cor0101}
Let $\vf\colon A \to B$ and $\psi\colon B\to C$ 
be  ring homomorphisms of  normal integral domains. 
Assume that $\vf$ has finite flat dimension and that
$\psi$ either has finite flat dimension or is injective and integral.
Then each of the maps $\vf$, $\psi$ and $\psi\circ\vf$ satisfies
the hypotheses of Theorem~\ref{prop0101},
and $\Cl(\psi \circ\vf)=\Cl(\psi) \circ\Cl(\vf)$.
\end{corollary}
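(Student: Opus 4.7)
The plan is to verify, case by case, that each of $\vf$, $\psi$, and $\psi\circ\vf$ satisfies the hypotheses of Theorem~\ref{prop0101} (i.e.\ that preimages of height-$\leq 1$ primes lie in $\UFD$ of the source), and then invoke Theorem~\ref{thm0101} directly to obtain the functoriality conclusion.

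For $\vf$, the hypothesis of Theorem~\ref{prop0101} is exactly what is checked in paragraph~\eqref{proofA}, so this is immediate from the finite flat dimension assumption. For $\psi$, I would split into the two cases: if $\psi$ has finite flat dimension, the same argument applies; if $\psi$ is injective and integral, I would invoke Remark~\ref{rmk0102}, which uses Cohen–Seidenberg to conclude that $\Ht(\psi^{-1}(\mathfrak P))\leq 1$ whenever $\Ht(\mathfrak P)\leq 1$, and then the normality of $B$ gives $B_{\psi^{-1}(\mathfrak P)}$ regular, hence a UFD.

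The only step requiring a small new argument is $\psi\circ\vf$. When both $\psi$ and $\vf$ have finite flat dimension, the composition does too, so paragraph~\eqref{proofA} applies verbatim. The mixed case is the one to be careful about: assume $\vf$ has finite flat dimension and $\psi$ is injective and integral, and fix a prime $\mathfrak P\in\Spec(C)$ with $\Ht(\mathfrak P)\leq 1$. Set $P=\psi^{-1}(\mathfrak P)$ and $\fp=\vf^{-1}(P)=(\psi\circ\vf)^{-1}(\mathfrak P)$. Cohen–Seidenberg gives $\Ht(P)\leq 1$, so $B_P$ is regular; then, since $\vf_P\colon A_{\fp}\to B_P$ has finite flat dimension, the theorem of Avramov–Foxby–Halperin cited in~\eqref{proofA} forces $A_{\fp}$ to be regular, so $\fp\in\UFD(A)$. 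This is the heart of the argument and the only place where one genuinely chains the two hypotheses on $\vf$ and $\psi$.

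Having verified the hypotheses for all three maps, I would conclude by applying Theorem~\ref{thm0101} to obtain $\Cl(\psi\circ\vf)=\Cl(\psi)\circ\Cl(\vf)$. I expect no real obstacles here: all ingredients are already in the paper, and the task is simply to assemble them in the correct order, with the mixed case being the single mildly delicate point.
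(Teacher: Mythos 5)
Your proposal is correct and follows essentially the same route as the paper: verify the hypotheses of Theorem~\ref{prop0101} for $\vf$ and $\psi$ via \eqref{proofA} and Remark~\ref{rmk0102}, handle the composition by combining Cohen--Seidenberg with the Avramov--Foxby--Halperin descent of regularity (exactly as the paper does for the mixed case), and then apply Theorem~\ref{thm0101}. The only difference is cosmetic: you spell out explicitly the step the paper compresses into ``Since $\vf$ has finite flat dimension, the prime $\ldots$ is in $\UFD(A)$.''
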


\begin{proof}
Our assumptions imply that $\vf$ and $\psi$ both satisfy
the hypotheses of Theorem~\ref{prop0101}; see
Theorem~\ref{prop0102} and Remark~\ref{rmk0102}.
In light of Theorem~\ref{thm0101}, it suffices to show that 
the composition $\psi\circ\vf$ satisfies
the hypotheses of Theorem~\ref{prop0101}.

If $\vf$ and $\psi$ both have finite flat dimension, then the composition
$\psi\circ\vf$ also has finite flat dimension and therefore satisfies
the hypotheses of Theorem~\ref{prop0101}. Assume now that 
$\psi$  is injective and integral. For each prime ideal
\newcommand{\fP}{\mathfrak{P}}
$\fP\in\Spec(C)$ of height at most one, 
the Cohen-Seidenberg Theorems imply that
the prime $P=\psi^{-1}(\fP)\in\Spec(B)$ has
$\operatorname{ht}(P)=\operatorname{ht}(\fP)\leq 1$.
Since $\vf$ has finite flat dimension, the prime
$\p=\vf^{-1}(P)=(\psi\circ\vf)^{-1}(\fP)\in\Spec(A)$
is in $\UFD(A)$, as desired.
\end{proof}

\section{Kernels of Homomorphisms Induced by Sequences of Ideals} \label{sec02}

The motivating principle  for the work in
this section is the idea that the pathological behavior of induced maps on divisor 
class groups, at least concerning injectivity, lies near the ``top'' of the maximal ideal
of a local ring.  
We begin by specifying some assumptions  for  the section.

\begin{assumption} \label{assn0201}
Throughout this section $(A,\m)$ is a local normal integral domain.
Fix  a sequence $I_1,I_2,\ldots$ of nonzero prime ideals of $A$ such that
$\lim_{n \to \infty} I_n = 0$ in the $\m$-adic topology. In other words,
for each integer $i\geq 1$, there is an integer $n_i\geq 1$ such that,
for all $n\geq n_i$ we have $I_n\subseteq\m^i$. 
We set
$P=\prod_n(A/I_n)$ and $S=\coprod_n(A/I_n)$, and we let
$\iota\colon A\to P$ denote the natural $A$-module homomorphism given by 
$\iota(a)=(a+I_1,a+I_2,\ldots)$.
Let $\pi\colon P\to P/S$ be the natural surjection.
\end{assumption}

\begin{lemma} \label{dprodsplits} 
With notation as in Assumption~\ref{assn0201}, if $A$ is complete, then the $A$-module homomorphism $\iota \colon A \to P$ is a split injection.
\end{lemma}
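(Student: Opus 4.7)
The plan is to prove this in two stages: first establish that $\iota$ is a \emph{pure} embedding, and then invoke pure-injectivity of $A$ as a module over itself (a consequence of $A$ being a complete Noetherian local ring) to conclude that the pure embedding must split.

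For injectivity of $\iota$ I would argue as follows: if $\iota(a)=0$ then $a\in\bigcap_n I_n$, and the hypothesis $\lim_n I_n=0$ gives indices $n_i$ with $I_{n_i}\subseteq\m^i$, so $a\in\bigcap_i\m^i=0$ by Krull's intersection theorem. Next, to show $\iota$ is pure, I would fix a finitely generated $A$-module $N$ (equivalently, finitely presented, since $A$ is Noetherian) and verify that $\iota\otimes_A N\colon N\to P\otimes_A N$ is injective. The canonical comparison map
\[
\Bigl(\prod_n A/I_n\Bigr)\otimes_A N\longrightarrow \prod_n\bigl((A/I_n)\otimes_A N\bigr)=\prod_n N/I_nN
\]
is an isomorphism for finitely presented $N$ (pick a presentation $A^s\to A^r\to N\to 0$ and use right-exactness of $-\otimes_A N$ together with exactness of products in the category of $A$-modules). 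Under this identification $\iota\otimes_A N$ becomes the diagonal, whose kernel is $\bigcap_n I_nN$. Since $I_n\subseteq\m^{k_n}$ with $k_n\to\infty$, we have $\bigcap_n I_nN\subseteq\bigcap_k\m^kN=0$ by Krull's intersection theorem applied to $N$, so $\iota$ is pure.

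The final step is to invoke the structural fact that a complete Noetherian local ring is pure-injective as a module over itself. One clean justification is to realize $A\cong\Hom_A(E,E)$ with $E=E_A(A/\m)$, since modules of the form $\Hom_A(-,E)$ are always pure-injective. Because every pure embedding of a pure-injective module splits, $\iota$ admits an $A$-linear retraction, as required.

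The main obstacle is precisely the appeal to pure-injectivity of $A$: a more naive coordinate-by-coordinate construction of a retraction is unavailable, since $\Hom_A(A/I_n,A)=0$ for each nonzero prime $I_n$ (as $A$ is a domain), forcing any retraction to combine information from infinitely many coordinates in an essential way. The pure-injectivity framework packages exactly this use of $\m$-adic completeness.
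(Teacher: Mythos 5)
Your proof is correct and takes essentially the approach that the paper relies on: the paper simply defers to Spiroff's Lemma~2.6, whose argument (following Griffith) shows that $\iota$ is a pure embedding and then invokes the pure-injectivity of a complete Noetherian local ring as a module over itself (via Matlis duality, $A\cong\Hom_A(E,E)$) to split it. Your verification of purity via the isomorphism $(\prod_n A/I_n)\otimes_A N\cong\prod_n N/I_nN$ for finitely presented $N$ and Krull's intersection theorem, together with the standard fact that $\Hom_A(-,E)$ always lands in pure-injectives (by Hom--tensor adjointness and injectivity of $E$), is exactly the right skeleton.
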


\begin{proof}
The proof is essentially the same as in \cite[Lem.\ 2.6]{Sp}, but with the principal 
ideals replaced by the $I_n$.  
\end{proof}

\begin{proposition}  \label{AdsumP/S} 
With notation as in Assumption~\ref{assn0201},
if $A$ is complete, then the $A$-module homomorphism $\pi\circ\iota\colon A\to P/S$ is a split injection.
\end{proposition}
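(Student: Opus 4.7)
The plan is to deduce the proposition directly from Lemma~\ref{dprodsplits} together with the observation that $\Hom_A(S,A)=0$, which forces any splitting of $\iota$ to factor through $\pi$.

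By Lemma~\ref{dprodsplits}, since $A$ is complete, there exists an $A$-module homomorphism $\sigma\colon P\to A$ with $\sigma\circ\iota=\id_A$. First I would verify that $\sigma$ automatically vanishes on $S$. The point is that $S=\coprod_n A/I_n$, so $\Hom_A(S,A)\cong\prod_n\Hom_A(A/I_n,A)$. For each $n$, an element of $\Hom_A(A/I_n,A)$ corresponds to an element of $A$ annihilated by $I_n$; since $A$ is a domain and $I_n$ is a nonzero (prime) ideal, this forces the zero map. Hence $\Hom_A(S,A)=0$, and in particular $\sigma|_S=0$.

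Because $\sigma$ kills $S$, it factors uniquely through the surjection $\pi\colon P\to P/S$; that is, there is a unique $A$-module homomorphism $\rho\colon P/S\to A$ with $\rho\circ\pi=\sigma$. Then
\[
\rho\circ(\pi\circ\iota)=(\rho\circ\pi)\circ\iota=\sigma\circ\iota=\id_A,
\]
so $\rho$ is a left inverse of $\pi\circ\iota$. This shows simultaneously that $\pi\circ\iota$ is injective and that it is split by $\rho$.

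There is essentially no serious obstacle here once the factorization is spotted; the only thing one must be careful about is that the hypothesis ``$I_n$ is a nonzero prime'' is used only in the very mild form ``$I_n$ is a nonzero ideal of the domain $A$'', which is precisely what kills $\Hom_A(A/I_n,A)$. In particular, one does not need $A$ to be normal or the $I_n$ to be prime for this step, but only to invoke Lemma~\ref{dprodsplits}, which supplies the initial splitting $\sigma$ under the completeness assumption.
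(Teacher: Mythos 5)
Your proof is correct and follows the same overall strategy as the paper: both start from the splitting $\sigma\colon P\to A$ of $\iota$ furnished by Lemma~\ref{dprodsplits}, show that $\sigma$ vanishes on $S$, and then factor $\sigma$ through $\pi$ to obtain the desired retraction of $\pi\circ\iota$. The one place where you genuinely diverge is the justification that $\sigma|_S=0$. The paper proves this by an element-level argument: given $s\in S$ with coordinates eventually in $I_n$, it picks a nonzero $y\in I_1\cdots I_N$, observes $ys=0$ in $P$, and concludes $y\,\eta(s)=\eta(ys)=0$, hence $\eta(s)=0$ because $A$ is a domain. You instead observe once and for all that $\Hom_A(S,A)\cong\prod_n\Hom_A(A/I_n,A)=0$, since each $\Hom_A(A/I_n,A)$ identifies with the annihilator of the nonzero ideal $I_n$ in the domain $A$. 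This is a cleaner, more structural version of the same observation, and it makes transparent exactly which hypotheses are being used (only that each $I_n$ is a nonzero ideal of a domain). A second small difference: the paper proves injectivity of $\pi\circ\iota$ separately, using $\bigcap_{n\ge n_0}I_n=0$; you correctly note this is superfluous once a left inverse is exhibited. Both arguments are valid; yours is marginally more economical.
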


\begin{proof} 
First, we show that $\pi \circ\iota$ is an injection. If $a\in\ker(\pi \circ\iota)$, then 
$$(a+I_1,a+I_2,\ldots)=\iota(a)\in S$$
so there is an integer $n_0\geq 1$ such that, for all $n\geq n_0$
we have $a+I_n=I_n$. In other words, we have
$a\in\cap_{n=n_0}^{\infty}I_n=0$.

Lemma~\ref{dprodsplits} yields an $A$-module homomorphism 
$\eta\colon P \to A$ such that $\eta\circ\iota=\id_A$. We claim that $\eta(S)=0$.
To show this, we fix an element $s\in S$.
Write $s=(s_1+I_1,s_2+I_2,\ldots)$ for some elements $s_i\in A$.
The condition $s\in S$ implies that there is an integer $N\geq 1$ such that,
for all $n\geq N$, we have $s_n+I_n=I_n$. Since $A$ is an integral domain
and each ideal $I_n$ is nonzero, there is a nonzero element $y\in I_1I_2\cdots I_N$.
It follows that $y  \eta(s) = \eta(ys) = 0$. Since $A$ is an integral domain and $y\neq 0$,
this implies $\eta(s)=0$, as claimed.

Hence, the map $\overline{\eta}\colon P/S \to A$ given by 
$\ol\eta\left(\ol p\right)=\eta (p)$ is a well-defined $A$-module homomorphism.
From the equality $\eta \circ\iota=\id_A$, it follows readily that
$\ol\eta \circ(\pi\circ\iota)=\id_A$, so that $\ol\eta$ is the desired splitting.
\end{proof}

The proof of Theorem~\ref{thm0201} relies on the notion of purity, which we discuss next.

\begin{definition} \label{defn0201}
A short exact sequence $\mathcal S \colon 0 \to M_1 \to M_2 \to M_3 \to 0$ of $A$-module
homomorphisms is 
\emph{pure exact} if, for each $A$-module $L$, the sequence $\mathcal S \otimes_A L$ is exact.  
(When $\mathcal S$ is 
pure exact, we also say that $M_1$ is a \emph{pure submodule} of $M_2$.)  
\end{definition}

\begin{remark} \label{rmk0201}
Let $\mathcal S$ 
be a short exact sequence of $A$-module homomorphisms. The sequence $\mathcal S$ is 
pure exact if and only if, 
for each finitely-presented $A$-module $L$, the 
sequence $\Hom_A(L,\mathcal S)$ is exact; 
see Fieldhouse~\cite[Cor.\ 7.1]{Fi}.  
As the $\m$-adic topology on $A$ is Hausdorff, 
the sequence $\mathcal S$ is pure exact if and only if, for each $A$-module
$L$ of finite length, the sequence $\mathcal S\otimes_AL$ is exact;
see Griffith~\cite[Cor.\ 3.2]{G2}.  
\end{remark}

\begin{lemma} \label{SPpure} 
With notation as in Assumption~\ref{assn0201},
the following exact sequence 
is pure exact: $0 \to S \to P \xra{\pi} P/S \to 0$.
\end{lemma}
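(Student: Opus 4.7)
The plan is to apply the purity criterion from Remark~\ref{rmk0201}: the sequence $0\to S\to P\to P/S\to 0$ is pure exact if and only if, for every finite length $A$-module $L$, the induced sequence $0\to S\otimes_AL\to P\otimes_AL\to (P/S)\otimes_AL\to 0$ is exact. Since tensor product is always right exact, I only need to verify injectivity of the map $S\otimes_A L\to P\otimes_A L$ for such $L$.

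To check this, I would identify both tensor products explicitly in terms of the sequence of modules $L/I_nL$. Tensoring with a direct sum commutes with $\bigoplus$ unconditionally, giving a natural isomorphism $S\otimes_A L\cong \bigoplus_n L/I_nL$ for every $A$-module $L$. For finitely presented $L$, which includes all finite length $L$ over the Noetherian ring $A$, tensoring with $L$ commutes with arbitrary direct products, yielding $P\otimes_A L\cong \prod_n L/I_nL$. Under these two identifications, the map in question becomes the canonical inclusion of the direct sum into the direct product, $\bigoplus_n L/I_nL\hookrightarrow\prod_n L/I_nL$, which is manifestly injective.

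The substantive step is selecting the correct purity test-module class. An arbitrary $A$-module $L$ need not satisfy $P\otimes_A L\cong \prod_n L/I_nL$, so reducing to finitely presented (equivalently, for Noetherian $A$, finite length) test modules is not a technicality but the crux of the argument. Once that reduction, supplied by the Hausdorff form of Griffith's criterion quoted in Remark~\ref{rmk0201}, is in hand, no further calculation is required; the main obstacle is thus conceptual rather than computational.
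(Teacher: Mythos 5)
Your proposal is correct and follows essentially the same route as the paper: invoke Remark~\ref{rmk0201} to reduce to finite-length test modules $L$, compute $S\otimes_AL\cong\coprod_n(L/I_nL)$ and $P\otimes_AL\cong\prod_n(L/I_nL)$ (the latter using that $L$ is finitely generated, hence finitely presented over the Noetherian ring $A$), and observe the induced map is the injective inclusion of the coproduct into the product. The paper records the same argument via an explicit commutative diagram, but the content is identical.
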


\begin{proof}  
Let $\epsilon\colon S\to P$ be the natural inclusion.
Because of Remark~\ref{rmk0201}, we need only
show that the map $\epsilon\otimes_AL\colon S\otimes_AL\to P\otimes_AL$
is injective for each $A$-module $L$ of finite length. We have two sequences of natural isomorphisms:
\begin{align*}
S \otimes_A L
&\cong\biggl[\coprod_n(A/I_n)\biggr]\otimes_A L \cong \coprod_n (L/I_nL) \\
P \otimes_A L 
&\cong \biggl[\prod_n(A/I_n) \biggr]\otimes_A L \cong\prod_n (L/I_nL).
\end{align*}  
The final isomorphism uses the fact that $L$ is finitely generated;
see, e.g., \cite[Thm.\ 3.2.22]{EJ}.  
These isomorphisms fit into the following commutative diagram
$$\xymatrix{
S \otimes_A L \ar[r]^-{\epsilon \otimes_A L} \ar[d]_{\cong}
&P \otimes_A L\ar[d]^{\cong} \\
\coprod_n (L/I_nL)\ar[r]^-{\epsilon'}
&\prod_n (L/I_nL)
}$$
where $\epsilon'$ is the natural inclusion.
Since $\epsilon'$ is injective, the diagram shows that $\epsilon \otimes_A L$
is also injective, as desired.
\end{proof}

\begin{proposition} \label{P/S} 
With notation as in Assumption~\ref{assn0201},
if $M$ is a finitely generated $A$-module and $\Hom_A(M, A/I_n) \cong A/I_n$ for all 
$n$, then $\Hom_A(M, P/S) \cong P/S$.
\end{proposition}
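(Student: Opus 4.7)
The plan is to apply $\Hom_A(M,-)$ to the short exact sequence $0 \to S \to P \to P/S \to 0$ furnished by Lemma~\ref{SPpure} and identify the first two terms of the resulting sequence with $S$ and $P$.

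First, since $A$ is Noetherian and $M$ is finitely generated, $M$ is finitely presented. Combined with Lemma~\ref{SPpure} and the $\Hom$-characterization of purity in Remark~\ref{rmk0201}, the functor $\Hom_A(M,-)$ preserves exactness of the sequence $0 \to S \to P \to P/S \to 0$, yielding an exact sequence
\[0 \to \Hom_A(M, S) \to \Hom_A(M, P) \to \Hom_A(M, P/S) \to 0.\]
Now $\Hom_A(M,-)$ commutes with products, so $\Hom_A(M, P) \cong \prod_n \Hom_A(M, A/I_n)$; and because $M$ is finitely presented, $\Hom_A(M, -)$ also commutes with the direct sum defining $S$, so $\Hom_A(M, S) \cong \coprod_n \Hom_A(M, A/I_n)$.

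Next, I would choose isomorphisms $\phi_n \colon \Hom_A(M, A/I_n) \xrightarrow{\cong} A/I_n$ provided by the hypothesis and assemble them coordinate-wise to produce isomorphisms $\alpha \colon \Hom_A(M, P) \xrightarrow{\cong} P$ and $\beta \colon \Hom_A(M, S) \xrightarrow{\cong} S$. By the naturality of these identifications under the coordinate projections, the map $\Hom_A(M, S) \to \Hom_A(M, P)$ induced by the inclusion $S \hookrightarrow P$ corresponds, via $\alpha$ and $\beta$, to the inclusion $S \hookrightarrow P$ itself. Passing to cokernels in the resulting commutative square then delivers an isomorphism $\Hom_A(M, P/S) \cong P/S$.

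The step that warrants the most care is the commutation of $\Hom_A(M,-)$ with the infinite direct sum defining $S$. Starting from a finite presentation $A^s \to A^r \to M \to 0$ and applying $\Hom_A(-, \coprod_n A/I_n)$, the key observation is that any homomorphism from the finitely generated free module $A^r$ to $\coprod_n A/I_n$ has image contained in a finite subsum, so $\Hom_A(A^r, \coprod_n A/I_n) \cong \coprod_n (A/I_n)^r$; the identification for $M$ then follows by left-exactness of $\Hom_A(-,\coprod_n A/I_n)$ and a routine diagram chase. Beyond this, everything reduces to standard functoriality.
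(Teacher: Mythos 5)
Your proof is correct and follows the same strategy as the paper's: apply $\Hom_A(M,-)$ to the pure exact sequence $0\to S\to P\to P/S\to 0$, use finite presentability together with Remark~\ref{rmk0201} to get exactness of the resulting sequence, identify $\Hom_A(M,P)\cong P$ and $\Hom_A(M,S)\cong S$ by commuting $\Hom_A(M,-)$ with products and (since $M$ is finitely generated) coproducts, and conclude via the resulting commutative diagram. The extra detail you supply for the commutation of $\Hom_A(M,-)$ with the direct sum is sound, though slightly more machinery than needed: finite generation of $M$ alone already forces any homomorphism $M\to\coprod_n A/I_n$ to land in a finite subsum, so the natural injection $\coprod_n\Hom_A(M,A/I_n)\to\Hom_A(M,\coprod_n A/I_n)$ is surjective without invoking a presentation.
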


\begin{proof}  
We have two sequences of isomorphisms
\begin{align*}
\Hom_A(M, P) 
&\cong \Hom_A\biggl(M,\prod_n(A/I_n)\biggr)\cong \prod_n\Hom_A(M,A/I_n)\cong\prod_n(A/I_n)
	\cong P \\
\Hom_A(M, S) 
&\cong \Hom_A\biggl(M,\coprod_n(A/I_n)\biggr)\stackrel{(\ast)}{\cong} \coprod_n\Hom_A(M,A/I_n)\cong
	\coprod_n(A/I_n)\cong S.
\end{align*}
The isomorphism $(\ast)$ uses the fact that $M$ is finitely generated.
Lemma~\ref{SPpure} shows that the sequence 
$0 \to S \to P \to P/S \to 0$ is pure exact, 
so Remark~\ref{rmk0201} implies that the top row of the following commutative diagram
is exact:
$$\xymatrix{
& 0 \ar[r] & \Hom_A(M, S) \ar[r] \ar[d]^{\cong} & \Hom_A(M, P) \ar[r] \ar[d]^{\cong}  
	&  \Hom_A(M, P/S) \ar[r]  & 0 \\
& 0 \ar[r] & S \ar[r] & P \ar[r] &P/S \ar[r] & 0.}$$
The vertical isomorphisms are the ones from the beginning of the proof.
Since both rows of this diagram are exact, a standard argument shows that
there is an induced isomorphism
$\Hom_A(M, P/S)\xra\cong P/S$, as desired. 
\end{proof}

We require two more key notions for the proof of Theorem \ref{thm0201}.

\begin{definition} \label{defn0202}
Let $M$ be an $A$-module and $N\subseteq M$ an $A$-submodule.
\begin{enumerate}[\quad(a)]
\item
We let $\ol N$ denote the $\m$-adic closure in $M$, that is,
$$\ol N=\{x \in M \mid \text{for each integer $r\geq 1$ we have $(x+\m^rM) \cap N \neq 
\emptyset$} \}.$$
\item
The $A$-module $M$ is \emph{$\m$-divisible} if $\m M = M$.  
\end{enumerate}
\end{definition}

\begin{lemma}  \label{mdiv} 
With notation as in Assumption~\ref{assn0201},
let $M$ be an $A$-module.
If  $N$ is a pure $A$-submodule of $M$, then $\overline{N}/N$ is the unique maximal 
element of the set of $\m$-divisible submodules of $M/N$, ordered by inclusion.
\end{lemma}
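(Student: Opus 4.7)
The plan is to verify two things: (i) every $\m$-divisible submodule of $M/N$ is contained in $\overline N/N$ (maximality), and (ii) $\overline N/N$ is itself $\m$-divisible.

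Maximality is the easy half and does not require purity: if $D/N \subseteq M/N$ satisfies $\m(D/N) = D/N$, then $D = N + \m D$, and iterating gives $D = N + \m^r D \subseteq N + \m^r M$ for every $r \geq 1$; intersecting over $r$ yields $D \subseteq \bigcap_r(N + \m^r M) = \overline N$, so $D/N \subseteq \overline N/N$.

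For (ii), I aim to prove the equivalent statement $\overline N = N + \m\overline N$. Fix $x \in \overline N$ and, for each $r \geq 1$, choose $n_r \in N$ with $x - n_r \in \m^r M$. Purity of $N$ in $M$ supplies $N \cap \m^r M = \m^r N$, and hence for $s \geq r$,
\[
n_s - n_r \;=\; (x-n_r) - (x-n_s) \;\in\; N \cap \m^r M \;=\; \m^r N,
\]
so $\{n_r\}$ is Cauchy in $N$ for the $\m$-adic topology inherited from $M$, with $n_r \to x$. Telescoping $n_r - n_1 = \sum_{s=1}^{r-1}(n_{s+1}-n_s)$ and decomposing $n_{s+1} - n_s = \sum_{i=1}^{d} a_i \mu_{s,i}$ with $\mu_{s,i} \in \m^{s-1}N$ (using generators $\m = (a_1,\dots,a_d)$), the idea is to pass to the limit and write $x - n_1 = \sum_i a_i \sigma_i$ with $\sigma_i \in \overline N$ the limit of the partial sums $\sigma_i^{(r)} := \sum_{s=1}^{r-1}\mu_{s,i}$.

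The cleanest intermediate target is the identity $\overline N \cap \m M = \m \overline N$ (equivalently, that $\overline N$ itself is pure in $M$ at the first level, and by the analogous argument at every level). Once this is known, taking $r = 1$ above gives $x - n_1 \in \m M$, so $x - n_1 \in \overline N \cap \m M = \m\overline N$, whence $x \in N + \m\overline N$ as required. The main obstacle will be proving $\overline N \cap \m M \subseteq \m\overline N$: the Cauchy sequences $\sigma_i^{(r)}$ need not converge in $M$, since $M$ is not assumed $\m$-adically complete, so one cannot simply take limits. I expect the argument to require extracting the $\sigma_i$'s recursively, at each stage peeling off an $\m$-multiple contribution coming from $\m^s N \subseteq \m\overline N$ via purity and leaving a smaller tail in $\overline N \cap \m^{s+1}M$, and iterating in a way that produces genuine elements of $\overline N$ in the end.
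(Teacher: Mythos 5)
Your ``maximality'' half is correct and is essentially the same as the corresponding part of the paper's argument: every $\m$-divisible submodule of $M/N$ is contained in $\bigcap_r \m^r(M/N)$, and this intersection equals $\overline N/N$. (The paper phrases this as establishing the equality $\overline N/N = \bigcap_r \m^r(M/N)$.) Your use of purity to deduce $N\cap \m^r M = \m^r N$, and thence $n_{s+1}-n_s \in \m^s N$, is also exactly the point at which the paper invokes purity, via \cite[Thm.\ 7.13]{M}.

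The difficulty is in the ``divisibility'' half, and you correctly diagnose it but do not resolve it: the proposal ends with a description of an intended recursive construction rather than an argument. The gap is genuine and cannot be closed by peeling alone. Indeed, taking $N=0$ (which is always pure) the assertion becomes ``$\bigcap_r \m^r M$ is $\m$-divisible for every $A$-module $M$,'' and this fails: over $A=k[[x]]$, let $M$ be presented by generators $e_0,e_1,e_2,\dots$ and relations $x^ne_n=e_0$ for $n\geq 1$; then $\bar e_0 \in \bigcap_r \m^r M$, but a coefficient count modulo $\m$ shows $\bar e_0\notin \m\bigl(\bigcap_r \m^r M\bigr)$. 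So some hypothesis beyond purity of $N$ is needed to make the statement literally true as written. Your Cauchy-sequence setup does succeed under the extra hypothesis that $M$ is $\m$-adically complete: then each partial sum $\sigma_i^{(r)}$ converges to some $\sigma_i\in M$, and since $\sigma_i$ is a limit of elements of $N$ it lies in $\overline N$, whence $x = n_1 + \sum_i a_i\sigma_i \in N + \m\overline N$. This covers the paper's application, where $M=P=\prod_n A/I_n$ with $A$ complete, so $P$ is complete. For what it is worth, the paper's own write-up also leaves the divisibility of $\bigcap_r\m^r(M/N)$ unproved, asserting it is ``straightforward,'' and its Step 1 (the purity identity $\m^rM\cap N=\m^rN$) is in fact not used in the portion of the proof that is actually carried out; it is needed only for the elided divisibility argument you attempted.
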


\begin{proof}  
We claim that the $\m$-adic topology on $N$ coincides with the subspace topology on $N$
induced by the $\m$-adic topology on $M$. 
To prove this, we fix an integer $n\geq 1$ and show that
$\m^n M \cap N=\m^n N$. The containment $\m^n M \cap N\supseteq\m^n N$ is standard.
For the reverse containment, 
let $x \in \m^n M \cap N$. Write $x = \sum_i a_i w_i$ where $a_i \in \m^n$ and $w_i \in M$.  
Because $N$ is pure, there exist $y_i \in N$ such that $x = \sum a_i y_i \in \m^n N$;
see~\cite[Thm.\ 7.13]{M}. This establishes the claim.

To complete the proof, it suffices to show that
$\ol N/N=\cap_{n=1}^{\infty}\m^n(M/N)$ since it is straightforward
to show that the right-hand side of this equality is the unique maximal 
$\m$-divisible submodule of $M/N$.
To verify the containment $\ol N/N\subseteq\cap_{n=1}^{\infty}\m^n(M/N)$,
we fix an element $z+N \in \overline{N}/N$.
The condition $z\in\ol N$ implies that,
for each integer $n\geq 1$,
there exists an element 
$v_n \in (z+\m^n M) \cap N$. 
Since we have $v_n\in N$, we conclude that
$(z-v_n)+N=z+N$. The inclusion $z-v_n \in \m^n M$ then implies that
$$z+N=(z-v_n)+N\subseteq\m^n M+N$$
and so 
$$z+N\in(\m^n M+N)/N=\m^n(M/N)$$
as desired.

For the reverse containment, let $z+N\in\cap_{n=1}^{\infty}\m^n(M/N)$.
For each $n\geq 1$, 
there is an element $w_n \in \m^nM$ such that
$z+N = w_n + N$.  It follows that we have $z-w_n \in (z+ \m^n M) \cap N$.
In particular, we conclude that $(z+ \m^n M) \cap N\neq \emptyset$ for each $n\geq 1$,
and so $z \in \overline{N}$.  
\end{proof}

\begin{remark} \label{rmk0202}
With notation as in Assumption~\ref{assn0201},
consider the Cartesian product $\prod A=\prod_{n=1}^{\infty}A$.
Given a sequence $(a_1,a_2,\ldots)\in\prod A$, we have
$\lim_{n\to\infty}a_n=0$ in the $\m$-adic topology when,
for each $i\geq 1$,
there exists $n_i\geq 1$ such that, for all $n\geq n_i$ we have $a_n\in\m^i$.
It is routine to show that the $\m$-adic closure of the submodule
$\coprod A=\coprod_{n=1}^{\infty}A\subseteq\prod A$ is 
\begin{align*}
\textstyle\ol{\coprod A}
& \textstyle 
	=\{(a_1,a_2,\ldots)\in\prod A\mid\text{$\lim_{n\to\infty}a_n=0$ in the
	$\m$-adic topology}\}. 
\end{align*}
In particular, we have $\ol{\coprod A}\subsetneq\prod A$. 

Similarly, since we have assumed that $\lim_{n\to \infty}I_n=0$ in the
$\m$-adic topology, it is readily shown that
$$\ol S=\{(a_1+I_1,a_2+I_2,\ldots)\in P\mid\text{$\lim_{n\to\infty}a_n=0$ in the
	$\m$-adic topology}\}.$$ 
In other words, if $\tau\colon \prod A\to P$ is the natural surjection,
then $\tau^{-1}(\ol S)=\ol{\coprod A}$.
\end{remark}

\begin{proposition}  \label{fflat} 
With notation as in Assumption~\ref{assn0201},
the $A$-module $P/\overline{S}$ is  flat.
\end{proposition}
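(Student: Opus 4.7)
The plan is to realize $P/\ol{S}$ as a quotient of a flat $A$-module by a pure submodule; such a quotient is automatically flat by a standard $\Tor$ long-exact-sequence argument.

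First I would unpack the quotient using Remark~\ref{rmk0202}. Let $\tau\colon\prod_n A\to P$ denote the natural surjection. Since $\tau^{-1}(\ol{S})=\ol{\coprod_n A}$, the map $\tau$ induces an isomorphism
\[P/\ol{S}\;\cong\;\Bigl(\prod_n A\Bigr)\Big/\ol{\coprod_n A}.\]
Because $A$ is Noetherian, any direct product of flat $A$-modules is flat (Chase's theorem), so $\prod_n A$ is flat. It therefore suffices to prove that $\ol{\coprod_n A}$ is a pure submodule of $\prod_n A$.

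For this I would invoke Cohn's criterion: purity is equivalent to the statement that every finite linear system $\sum_{j=1}^k a_{ij}Y_j=b_i$ (with $1\le i\le r$, $a_{ij}\in A$, and $b_i\in\ol{\coprod_n A}$) which is solvable in $\prod_n A$ is already solvable in $\ol{\coprod_n A}$. Write $b_i=(b_{i,n})_n$; the hypothesis $b_i\in\ol{\coprod_n A}$ says that for every $N\ge 1$, $b_{i,n}\in\m^N$ for $n$ sufficiently large and all $i$. Apply the Artin--Rees lemma to the submodule $\im\bigl(A^k\xra{(a_{ij})}A^r\bigr)\subseteq A^r$ to obtain a constant $c$, depending only on the matrix $(a_{ij})$, such that whenever $b_{i,n}\in\m^N$ with $N\ge c$, the solvable system $\sum_j a_{ij}Y_j=b_{i,n}$ admits a solution $(Y_{j,n})_j\in(\m^{N-c})^k$. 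Patching these solutions together in $n$ (and using any solution for the finitely many small $n$ where this estimate fails) yields a solution $Y_j=(Y_{j,n})_n$ lying in $\ol{\coprod_n A}$, as required.

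The substantive step is this Artin--Rees argument, which converts pointwise solvability into a solution whose coordinates tend to $0$ uniformly in the $\m$-adic topology. Granting purity, the short exact sequence $0\to\ol{\coprod_n A}\to\prod_n A\to P/\ol{S}\to 0$ together with flatness of $\prod_n A$ gives $\Tor_1^A(P/\ol{S},L)=0$ for every $A$-module $L$, whence $P/\ol{S}$ is flat.
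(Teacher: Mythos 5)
Your argument is correct and amounts to a self-contained proof of the result the paper cites. The paper's own proof, after observing exactly as you do (via Remark~\ref{rmk0202}) that $P/\ol{S}\cong(\prod A)/\ol{\coprod A}$, simply invokes \cite[Lem.\ 2.5]{Sp} for the flatness of this quotient; you instead unpack that citation. Your reduction to purity is valid: $\prod A$ is flat because $A$ is Noetherian (Chase's theorem), and a quotient of a flat module by a pure submodule is flat by the long exact sequence of $\Tor$. The Artin--Rees step is where the substance lies, and your bookkeeping is sound: the constant $c$ depends only on the fixed matrix $(a_{ij})$, not on the index $n$, so from $(b_{1,n},\dots,b_{r,n})$ lying both in $\m^N A^r$ and in the image of $(a_{ij})\colon A^k\to A^r$ (with $N\ge c$) you extract a component solution in $\m^{N-c}A^k$; since for each $N$ this applies once $n\gg 0$, the chosen $Y_{j,n}$ tend to $0$ $\m$-adically and the assembled $Y_j=(Y_{j,n})_n$ lie in $\ol{\coprod A}$, as Cohn's criterion for purity requires. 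In short, you have reproduced a correct proof of the cited lemma rather than found a genuinely different route, but the spelled-out argument is a worthwhile supplement given that the paper defers this work to a reference.
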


\begin{proof}  
Let $\tau\colon \prod A\to P$ be the natural surjection.
From Remark~\ref{rmk0202}, it can be seen that 
$(\prod A)/\ol{\coprod A}=(\prod A)/\tau^{-1}(\ol S)\cong P/\ol S$.
This $A$-module is flat by~\cite[Lem.\ 2.5]{Sp}.
\end{proof}

We are now ready for the main point of this section.

\begin{para} \label{proofB}
\textbf{Proof of Theorem~\ref{thm0201}.}
We first prove the result in the case where $A$ is complete.  
For each index $n$, set $A_n = A/I_n$.  
Each map $\Cl(\pi_n)\colon\Cl(A) \to \Cl(A_n)$ is a 
well-defined group homomorphism by Theorem~\ref{prop0102},
given  by the rule $\cl{\fa}
\mapsto \cl{\ddual{(\fa \otimes_A A_n)}{A_n}}$.  
Fix an element $\cl\fa\in\bigcap_{n=1}^\infty \ker(\Cl(\pi_n))$. To show that
$\cl\fa$ is trivial, 
it suffices to show that $\cl\fa^{-1}=\cl{\dual{\fa}{A}}$ is
trivial, where $\dual{\fa}{A} = \Hom_A(\fa, A)$.  That is, we need to show that
$\dual{\fa}{A}\cong A$.

For each index $n$, the class $\cl{\ddual{(\fa \otimes_A A_n)}{A_n}}$ is trivial
in $\Cl(A_n)$, and hence the class
$\cl{\ddual{(\fa \otimes_A A_n)}{A_n}}^{-1}
=\cl{\dddual{(\fa \otimes_A A_n)}{A_n}}$
is also trivial. This explains the first isomorphism in the next sequence
$$A_n
\cong\dddual{(\fa \otimes_A A_n)}{A_n}
\cong\dual{(\fa \otimes_A A_n)}{A_n}
=\Hom_{A_n}(\fa \otimes_A A_n,A_n)
\cong\Hom_A(\fa,A_n).$$
The second isomorphism follows from Remark~\ref{rmk0103}, the equality
is by definition, and the final isomorphism is from Hom-tensor adjointness.
Proposition~\ref{P/S} implies that $\Hom_A(\fa, P/S) \cong P/S$.   
By Proposition \ref{AdsumP/S} there is an $A$-module $T$ such that
$P/S\cong A\oplus T$, and
it follows that 
$$P/S\cong\Hom_A(\fa, P/S)\cong\Hom_A(\fa, A\oplus T)
\cong \Hom_A(\fa, A)\oplus \Hom_A(\fa, T)
=\dual\fa A\oplus K$$
where $K=\Hom_A(\fa, T)$.
The natural exact sequence
$0 \to \overline{S}/S \to P/S  \to  P/\overline{S} \to 0$
then has the form
$$0 \to \overline{S}/S \xra{f} \dual{\fa}{A} \oplus K \to  P/\overline{S} \to 0.$$
Lemma~\ref{mdiv} implies that the module $\ol S/S$ is $\m$-divisible.
By Nakayama's Lemma, the only $\m$-divisible submodule of $\dual{\fa}{A}$
is 0; hence,  the image of the composition
$\ol S/S\to \dual{\fa}{A}\oplus K\to \dual{\fa}{A}$ is zero.
It follows that $\im(f)\subseteq K$, and we conclude that
$$P/\overline{S} \cong \dual{\fa}{A} \oplus K/\im(f).$$
By Proposition \ref{fflat}, the $A$-module $P/\overline{S}$ is flat.  Since $\dual{\fa}{A}$ 
is a direct summand of $P/\overline{S}$,  it follows that $\dual{\fa}{A}$ is flat, that is,
free as an $A$-module.  In other words, the class $\cl{\dual{\fa}{A}}$ is trivial, as
desired. This completes the proof when $A$ is complete.

Next, we establish the general case of the result. Since $A$ is an  excellent local
normal integral domain, the $\m$-adic completion $\widehat{A}$ is also a
local normal integral domain
with maximal ideal $\m\widehat A$. Each quotient $A/I_n$ is also excellent,
so the ring
$\widehat{A/I_n}\cong\widehat{A}/I_n\widehat{A}$
is also a local normal integral domain. Since $\widehat{A}$ is flat over $A$,
we have $I_n\widehat{A}\cong I_n\otimes_A\widehat{A}$, so
$$\pd_{\widehat{A}}(I_n\widehat{A})=\pd_{\widehat{A}}(I_n \otimes_A\widehat{A})
=\pd_A(I_n)<\infty.$$
Also, the containment $I_n\subseteq\m^i$ implies that
$I_n\widehat{A}\subseteq\m^i\widehat A=(\m_{\widehat A})^i$,
so we have $\lim_{n\to\infty}I_n\widehat{A}=0$ in the $\m\widehat A$-adic topology.
It follows that the sequence $I_1\widehat{A},I_2\widehat{A},\ldots$ of ideals in $\widehat A$
satisfies the hypotheses of the result. 
Let $\tau_n\colon\widehat{A}\to\widehat{A}/I_n\widehat{A}$ be the
natural surjection.
Since $\widehat{A}$ is complete,
the first part of the proof implies that
$\bigcap_{n=1}^\infty\ker(\Cl(\tau_n))$ is trivial.

The next display contains the natural commutative diagram of local ring homomorphisms
and the resulting commutative diagram of divisor class group homomorphisms.
$$\xymatrix{
A \ar[r]^g \ar[d]_{\pi_n} & \widehat A \ar[d]^{\tau_n} \\
A/I_n\ar[r] &\widehat{A}/I_n\widehat{A} }
\qquad \qquad \qquad
\xymatrix{
\Cl(A) \ar@{^(->}[r]^{\Cl(g)} \ar[d]_{\Cl(\pi_n)} & \Cl(\widehat A) \ar[d]^{\Cl(\tau_n)} \\
\Cl(A/I_n)\ar@{^(->}[r] & \Cl(\widehat{A}/I_n\widehat{A})}$$
Note that the commutativity of the second diagram follows from Theorem~\ref{thm0101},
and the injectivity of the horizontal maps follows because the corresponding
ring homomorphisms are  flat and local. 
Fix an element $\cl\fa\in\bigcap_{n=1}^\infty \ker(\Cl(\pi_n))$. 
The commutativity of the second diagram implies that
$$\Cl(g)(\cl\fa)\in\bigcap_{n=1}^\infty \ker(\Cl(\tau_n)).$$
This intersection is trivial,
so the injectivity of $\Cl(g)$ implies that $\cl\fa$ is trivial. \qed
\end{para}

\begin{corollary} \label{cor01}
Let $(A, \m, k$) be an excellent normal local integral
domain, and let $I_1, I_2, \dots$ be a sequence of 
nonzero prime ideals in $A$ of finite projective dimension.
Assume that $\lim_{n \to \infty} I_n = 0$ in the $\m$-adic
topology and that $A/I_n$ is a normal integral domain for each $n$.
For $n=1,2,\ldots$ let $\pi_n\colon A\to A/I_n$ be the natural surjection.
\begin{enumerate}[\quad\rm(a)]
\item\label{cor01a}
For each nontrivial divisor class $\cl\fa\in\Cl(A)$,
there is an integer $N_{\cl\fa}\geq 1$ such that,
for all $n\geq N_{\cl\fa}$ we have $\cl\fa\notin\ker(\Cl(\pi_n))$.
\item\label{cor01b}
If $\Cl(A)$ is finite, then there is an integer $N\geq 1$ such that
$\Cl(\pi_n)$ is injective for all $n\geq N$.
\end{enumerate}
\end{corollary}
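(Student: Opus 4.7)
The plan is to derive both parts directly from Theorem~\ref{thm0201} by the observation that the hypotheses are preserved under passage to subsequences. Concretely, if $n_1<n_2<\cdots$ is any increasing sequence of positive integers, then the ideals $I_{n_1},I_{n_2},\ldots$ remain nonzero primes with $A/I_{n_k}$ normal and $\pd_A(I_{n_k})<\infty$, and we have $\lim_{k\to\infty}I_{n_k}=0$ in the $\m$-adic topology (since for each $i\geq 1$, the condition $I_n\subseteq\m^i$ for all $n\geq n_i$ in particular applies to all sufficiently large $n_k$). So Theorem~\ref{thm0201} applies to every subsequence.

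For part~\eqref{cor01a}, I would argue by contradiction. Suppose $\cl\fa\in\Cl(A)$ is nontrivial but $\cl\fa\in\ker(\Cl(\pi_n))$ for infinitely many indices $n$. Extract those indices to form a subsequence $n_1<n_2<\cdots$ with $\cl\fa\in\ker(\Cl(\pi_{n_k}))$ for every $k$. By the preceding observation, Theorem~\ref{thm0201} applies to the sequence $I_{n_1},I_{n_2},\ldots$, giving $\bigcap_{k=1}^{\infty}\ker(\Cl(\pi_{n_k}))=0$. Hence $\cl\fa$ is trivial, a contradiction. Therefore, $\cl\fa$ must leave the kernels from some index $N_{\cl\fa}$ on.

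Part~\eqref{cor01b} is then immediate by a finite max. Enumerate the finitely many nontrivial classes $\cl{\fa_1},\ldots,\cl{\fa_m}\in\Cl(A)$, apply part~\eqref{cor01a} to obtain integers $N_{\cl{\fa_1}},\ldots,N_{\cl{\fa_m}}$, and set $N=\max_{1\le i\le m}N_{\cl{\fa_i}}$. For each $n\geq N$, no nontrivial class lies in $\ker(\Cl(\pi_n))$, so $\ker(\Cl(\pi_n))$ is trivial and $\Cl(\pi_n)$ is injective.

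There is no real obstacle in this argument: the only thing to double-check is that the subsequence $I_{n_1},I_{n_2},\ldots$ actually satisfies all the hypotheses of Theorem~\ref{thm0201}, which it does trivially since the conditions (being a nonzero prime, having finite projective dimension, having a normal quotient, and tending to zero in the $\m$-adic topology) are all preserved by discarding terms. Thus the corollary reduces to a clean ``subsequence trick'' on top of the already-proved Theorem~\ref{thm0201}.
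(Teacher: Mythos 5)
Your argument is correct and is essentially identical to the paper's own proof: both prove part (a) by contradiction, extracting the subsequence of indices where $\cl\fa$ lies in the kernel and applying Theorem~\ref{thm0201} to the corresponding subsequence of ideals, and both deduce part (b) by taking the maximum of the finitely many integers $N_{\cl\fa}$.
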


\begin{proof}
\eqref{cor01a}
Suppose that no such integer exists. 
It follows that, for each integer $N\geq 1$, there is an integer $n\geq N$
such that $\cl\fa\in\ker(\Cl(\pi_n))$.
In other words, there is a strictly increasing sequence of integers
$1\leq n_1< n_2<\cdots$ such that $\cl\fa\in\ker(\Cl(\pi_{n_i}))$
for each index $i\geq 1$. However, the sequence of ideals
$I_{n_1},I_{n_2},\ldots$ satisfies the hypotheses of Theorem~\ref{thm0201},
so we have
$$\cl\fa\in\bigcap_{i=1}^\infty\ker(\Cl(\pi_{n_i})),$$
where the intersection, and hence $\cl\fa$, is trivial.  This is a contradiction.

\eqref{cor01b}
Since $\Cl(A)$ is finite,  the quantity
$N=\max\{N_{\cl\fa}\mid0\neq\cl\fa\in\Cl(A)\}$
is a well-defined integer. 
Fix a nontrivial divisor class $\cl\fa\in\Cl(A)$.
By definition of the $N_{\cl\fa}$,
one concludes that, for all $n\geq N\geq N_{\cl\fa}$,
we have $\cl\fa\notin\ker(\Cl(\pi_n))$. It follows that
$\Cl(\pi_n)$ is injective for all $n\geq N$, as desired.
\end{proof}

\section{Examples} \label{sec03}

In this section, we show how to construct sequences of ideals satisfying the
hypotheses of Theorem~\ref{thm0201}. Since the case where the ideals $I_n$ are principal
is covered by~\cite[Thm.\ 3.1]{Sp}, we exhibit examples that are not principal. Moreover, the 
following result, in conjunction with Example~\ref{ex0203},
shows how to construct examples that are not generated by regular sequences.

\begin{proposition} \label{prop0201}
Let $(A,\m)$ be an excellent local integral domain.
Assume that, for $n=1,2,\ldots$ and for $j=1,\ldots,6$ 
there exist elements $f_{n,j}\in\m$ such that, 
for $n=1,2,\ldots$ we have the following:
\begin{enumerate}[\quad\rm(1)]
\item \label{prop0201a}
The sequence $f_{n,1},\ldots,f_{n,6}$ is $A$-regular and contained in $\m^n$; and
\item \label{prop0201c}
The quotient $A/(f_{n,1},\ldots,f_{n,6})A$ is a normal integral domain.
\end{enumerate}
For $n=1,2,\ldots$ let $I_n$ be the ideal generated by the $2\times 2$ minors of the
matrix
$$F_n=
\begin{pmatrix} f_{n,1}& f_{n,2}& f_{n,3} \\  f_{n,4} &  f_{n,5} &  f_{n,6}
\end{pmatrix}.$$
Then each ideal $I_n\subseteq A$ is  prime and has
finite projective dimension over $A$, and each quotient $A/I_n$ is a normal integral domain.
Each ideal $I_n$ has height at most 2 and is minimally  generated by 3 elements;
in particular, $I_n$ is not generated by an $A$-regular sequence.
Furthermore, we have $I_n\subseteq\m^{2n}$ for each $n$, and thus
$\lim_{n\to\infty}I_n=0$ in the $\m$-adic topology.
\end{proposition}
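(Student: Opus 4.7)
The plan is to exploit the fact that $I_n$ is the ideal of maximal minors of a $2\times 3$ matrix whose entries form an $A$-regular sequence of the maximal length $6=mn$, so the classical Eagon-Northcott machinery applies. The elementary containment $I_n\subseteq\m^{2n}$ follows at once: each generator of $I_n$ is a $2\times 2$ minor of $F_n$, hence a difference of products of two entries from $\m^n$, which gives also $\lim_{n\to\infty}I_n=0$ in the $\m$-adic topology.

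Next, I would form the Eagon-Northcott complex of $F_n$:
\[
0\to A^2 \xra{d_2} A^3\xra{d_1} A\to A/I_n\to 0,
\]
where $d_1$ sends the standard basis to the three $2\times 2$ minors $m_1,m_2,m_3$ of $F_n$ and the two columns of $d_2$ record the rows of $F_n$ (up to signs) as the Laplace syzygies on the minors. Since the entries $f_{n,1},\ldots,f_{n,6}$ form an $A$-regular sequence of length $6$, the classical generic-perfection theorem for Eagon-Northcott complexes (see Bruns-Vetter, \emph{Determinantal Rings}) implies this complex is acyclic, hence a finite free resolution of $A/I_n$. This yields $\pd_A(I_n)=1<\infty$ and $\operatorname{grade}(I_n)=2$, and combined with the standard determinantal height bound $\Ht(I_t(F))\le(m-t+1)(n-t+1)=2$ gives $\Ht(I_n)=2$. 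All entries of $d_1$ lie in $\m^{2n}$ and those of $d_2$ in $\m^n$, so the resolution is minimal; hence $I_n$ has exactly three minimal generators and, being of grade $2$, cannot be generated by an $A$-regular sequence (which would require only two generators).

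The main obstacle is showing that $A/I_n$ is a normal integral domain; my plan is to verify Serre's criterion $R_1+S_2$ and then use locality. For $S_2$, set $R=A[y_1,\ldots,y_6]$ with generic $2\times 3$ matrix $Y$ in the $y_i$, and $J=I_2(Y)\subseteq R$. By the classical Hochster-Eagon theorem $R/J$ is Cohen-Macaulay, and a dimension count forces $y_1-f_{n,1},\ldots,y_6-f_{n,6}$ to be $R/J$-regular, whence $A/I_n\cong R/(J+(y_1-f_{n,1},\ldots,y_6-f_{n,6}))$ is Cohen-Macaulay. For $R_1$, the non-regular locus of $A/I_n$ is contained in $V(f_{n,1},\ldots,f_{n,6})\subseteq\Spec(A/I_n)$, which has codimension at least $4$ in $A/I_n$ (since $(f_{n,1},\ldots,f_{n,6})$ has height $6$ in $A$ and $\Ht(I_n)=2$); outside this locus some $f_{n,j}$ is a unit in the localization, and the identity $f_{n,4}m_3=f_{n,5}m_2-f_{n,6}m_1$ together with its analogs reduces $I_n$ to a complete intersection generated by two minors, at which point regularity of appropriate localizations can be checked using the hypothesis on $A/(f_{n,1},\ldots,f_{n,6})$. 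Once $A/I_n$ is normal, locality forces integrity: any Noetherian local normal ring has connected spectrum and is therefore an integral domain, so $I_n$ is prime. The hardest step is the $R_1$ verification, which requires the delicate interplay between the specialization of the generic Segre-type determinantal variety $\Spec(R/J)$ and the given normality of the top quotient $A/(f_{n,1},\ldots,f_{n,6})$; the other steps are routine consequences of the Eagon-Northcott apparatus.
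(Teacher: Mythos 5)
Your proof of the "easy" clauses — $I_n\subseteq\m^{2n}$, the minimality of the Eagon--Northcott (Hilbert--Burch) resolution, hence exactly three minimal generators, and the non--complete-intersection conclusion from $\operatorname{grade}(I_n)=2<3$ — is correct and matches the paper in spirit, though the paper works through the generic-perfection apparatus of Bruns--Vetter (Cor.\ 2.8, Thm.\ 3.9, Thm.\ 3.16) rather than the explicit complex. Both routes also leave the inequality $\operatorname{grade}(I_n)\ge 2$ (needed for acyclicity, or equivalently for the perfect-specialization theorem to apply) as a standard fact about matrices whose entries form a regular sequence, so I won't press you on that.

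The genuine gap is in your normality argument, and it is a substantive one. First, for $S_2$ you invoke Hochster--Eagon to say $R/J$ is Cohen--Macaulay and conclude $A/I_n$ is Cohen--Macaulay by specialization; but Hochster--Eagon requires the base ring $A$ to be Cohen--Macaulay, and here $A$ is only an excellent local domain. Indeed, since $A/I_n$ is $A$-perfect of grade $2$ and $\Ht(I_n)=2$, Auslander--Buchsbaum shows $A/I_n$ is Cohen--Macaulay if and only if $A$ is, so your $S_2$ route simply does not run without an extra hypothesis. Second, and more seriously, for $R_1$ you assert that the non-regular locus of $A/I_n$ is contained in $V(f_{n,1},\ldots,f_{n,6})$. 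This is false in general: away from $V(f_{n,1},\ldots,f_{n,6})$ the ideal $I_n$ does localize to a two-generated ideal via the Laplace relations, but the quotient $A_{P'}/(m_i,m_j)A_{P'}$ need not be regular unless $A_{P'}$ itself is regular, and nothing in the hypotheses controls the singular locus of $A$ off $V(f_{n,1},\ldots,f_{n,6})$. Your remark that "regularity of appropriate localizations can be checked using the hypothesis on $A/(f_{n,1},\ldots,f_{n,6})$" also misfires: that hypothesis constrains what happens \emph{on} $V(f_{n,1},\ldots,f_{n,6})$ (which your codimension count has already removed from consideration), not on its complement, which is where the actual work lies. The paper avoids both problems by citing Bruns--Vetter Prop.\ (3.13), a transfer theorem which shows that normality passes from the generic determinantal ring (normal by Thm.\ (2.11)) to $A/I_n$ precisely when $A/(f_{n,1},\ldots,f_{n,6})$ is normal; this is exactly where hypothesis (2) of the proposition enters, and it is the ingredient your hands-on Serre-criterion approach cannot supply.
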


\begin{proof}
\newcommand{\bbz}{\mathbb{Z}}
\newcommand{\ii}{\Im}
Consider the following matrix of independent polynomial variables
$$X=\begin{pmatrix} X_{1}& X_{2}& X_{3} \\  X_{4} &  X_{5} &  X_{6}
\end{pmatrix}$$
and let $\ii_2(X)\subseteq \bbz[X]=\bbz[X_1,\ldots,X_6]$ denote the ideal generated by
the $2\times 2$ minors of $X$. 
Let $\psi_n\colon\bbz[X]\to A$ be the unique ring homomorphism such that
$X_i\mapsto f_{n,i}$.

From Bruns and Vetter~\cite[Cor.\ (2.8)]{bruns:dr}
we conclude that the quotient $\bbz[X]/\ii_2(X)$ is a perfect $\bbz[X]$-module.
Since it is also a free $\bbz$-module, it is faithfully flat over $\bbz$.
(In the terminology of~\cite[Ch.\ 3]{bruns:dr}, these two conditions imply that
$\bbz[X]/\ii_2(X)$ is a \emph{generically perfect} $\bbz[X]$-module.)
It follows from~\cite[Thm.\ (3.9)]{bruns:dr} that the $A$-module
\begin{equation}\label{prop0201d}
\bbz[X]/\ii_2(X)\otimes_{\bbz[X]}A\cong A/\ii_2(X)A=A/I_n
\end{equation} 
is perfect; here we are tensoring along the homomorphism $\psi_n$.
In particular, the quotient $A/I_n$ has finite projective dimension
over $A$, and hence so does $I_n$.

For every field $K$, the ring $\bbz[X]/\ii_2(X)\otimes_{\bbz}K\cong K[X]/\ii_2(X)K[X]$ 
is a normal integral domain by~\cite[Thm.\ (2.11)]{bruns:dr}.
Using the sequence~\eqref{prop0201d}, we conclude from~\cite[Prop.\ (3.13)]{bruns:dr}
that $A/I_n$ is a normal integral domain.

It is straightforward to show that the ideal $I_n$ is minimally generated by 
the three $2\times 2$ minors of the matrix $F_n$. For the height computation,
we first note that $\Ht(\ii_2(X))=2$; this follows from the next sequence
$$2=\operatorname{grade}(\ii_2(X))\leq\Ht(\ii_2(X))\leq 2.$$
The equality in this sequence is from~\cite[Thm.\ (2.5)]{bruns:dr};
the first inequality is standard, and the second inequality 
follows from~\cite[Thm.\ (2.1)]{bruns:dr}. Hence, the inequality
$\Ht(I_n)\leq2$
is a consequence of~\cite[Thm.\ (3.16)]{bruns:dr}.

The final conclusions of the proposition follow directly from the definition
of $I_n$ and the assumption $f_{n,1},\ldots,f_{n,6}\in\m^n$.
\end{proof}

To conclude this section, we  construct an excellent normal integral
domain $A$ with a sequence of elements satisfying the hypotheses of
Proposition~\ref{prop0201}. 
This is accomplished in
Example~\ref{ex0203}. 
For clarity,
we complete the construction over the course of the following three
examples. Note that each of the examples contains a sequence
of ideals satisfying the hypotheses of Theorem~\ref{thm0201}.
In Example~\ref{ex0201}, each ideal is principal.
In Example~\ref{ex0202}, the ideals are generated by regular sequences of length 2.
Finally, Example~\ref{ex0203} contains ideals that are generated by regular sequences of length 6;
Proposition~\ref{prop0201} then applies to this example to yield ideals that 
satisfy the hypotheses of Theorem~\ref{thm0201}
and are
not generated by regular sequences.

\begin{example} \label{ex0201}
Let $(R, \m_R, k)$ be an excellent local normal domain containing the field $\mathbb Q$.
Consider the polynomial ring $P_1 = R[X_1, X_2,X_3]$ and the maximal ideal
$\n_1=(\m_R,X_1, X_2,X_3)P_1$, and set $A_1=(P_1)_{\n_1}$ with maximal ideal $\m_{A_1}=\n_1 A_1$.
For $n=1,2,\ldots$ we 
consider $f_n=X_1^n+X_2^n+X_3^n$ and set $I_n=(f_n)A_1$.
Each ideal $I_n$ is generated by the regular element $f_n$ and therefore has
finite projective dimension over $A_1$. Also, we have $\lim_{n\to\infty}I_n=0$ in
the $\m_{A_1}$-adic topology because each $f_n\in\m^n_{A_1}$. 
To show that the sequence of ideals 
$I_1,I_2, \dots$ satisfies the hypotheses of Theorem~\ref{thm0201},
we need to show that each ring $A_1/I_n$ is a normal domain.

\newcommand{\kp}{\kappa(\p)}
To this end, we note that 
the composition of natural maps $R\to A_1\to  A_1/I_n$ is a flat local homomorphism
by the corollary to~\cite[Thm.\ 22.6]{M}. 
For each prime ideal $\p\in\Spec(R)$, we set $\kp=(R/\p)_{\p}\cong R_{\p}\otimes_RR/\p$.
There is an isomorphism
$$\kappa(\p)\otimes_RA_1/I_n
\cong\left(\frac{\kp[X_1,X_2,X_3]}{(X_1^n+X_2^n+X_3^n)}\right)_{(\m_R,X_1,X_2,X_3)}$$
which is a localization of the ring
$B_n=\kp[X_1,X_2,X_3]/(X_1^n+X_2^n+X_3^n)$. 
The field $\kp$ has characteristic zero, so the Jacobian criterion shows that
$B_n$  is an isolated singularity, and therefore, it satisfies $(R_1)$.
It follows that $\kappa(\p)\otimes_RA_1/I_n$ also satisfies $(R_1)$.
Also, the ring $B_n$ is a hypersurface, so $\kappa(\p)\otimes_RA_1/I_n$ satisfies $(S_2)$.

In summary, the fibre ring $\kappa(\p)\otimes_RA_1/I_n$ is a normal integral domain
for each prime ideal $\p\in\Spec(R)$.
Since the ring $R$ is also a normal integral domain, the same is true of $A_1/I_n$ by \cite[Thm. 23.9]{M}
\end{example}

\begin{example} \label{ex0202}
Continue with the assumptions and notation of Example~\ref{ex0201}.
Consider the polynomial ring
$P_2 = R[X_1, X_2,X_3,Y_1,Y_2,Y_3]=P_1[Y_1,Y_2,Y_3]$ and the maximal ideal
$\n_2=(\m_R,X_1, X_2,X_3,Y_1,Y_2,Y_3)P_2$, 
and set $A_2=(P_2)_{\n_2}$ with maximal ideal $\m_{A_2}=\n_2 A_2$.
Note that $A_2$ is obtained from $A_1$ by the same procedure used to construct
$A_1$ from $R$. In particular, the ring $A_2$ is a flat $A_1$-algebra.

For $n=1,2,\ldots$ we 
consider  $g_n=Y_1^n+Y_2^n+Y_3^n$,
and we set $J_{n}=(f_n,g_n)A_2$.
Since the polynomial $f_n$ is prime in $A_1$,
it is also prime in $A_2$. 
Since $f_n$ and $g_n$ are given in terms of
independent sets of variables, it follows that
$g_n$ represents a nonzero element in the quotient
$A_2/(f_n)$. 
This quotient is an integral domain, so 
$g_n$ is $A_2/(f_n)$-regular.
In summary, 
for $n=1,2,\ldots$ the sequence $f_n,g_n$ is $A_2$-regular,
so the ideal $J_{n}$ has finite projective dimension over $A_2$.
Furthermore, the quotient $A_2/J_{n}$ can also be constructed from $A_1/I_n$
by the same procedure used to construct $A_1/I_n$ from $R$. 
Hence, Example~\ref{ex0201} applied to the ring $A_1/I_n$
shows that each quotient $A_2/J_{n}$ is a normal integral domain.
Finally, we have $f_n,g_n\in\m_{A_2}^n$ and it follows that
$\lim_{n\to\infty}J_{n}=0$ in the $\m_{A_2}$-adic topology.
In conclusion, the ring $A_2$ and the ideals $J_{1},J_{2},\ldots$
satisfy the hypotheses of Theorem~\ref{thm0201}.
\end{example}

\begin{example} \label{ex0203}
Let $(R, \m_R, k)$ be an excellent local normal integral domain with  $\operatorname{char}(k)=0$.
Let $\mathbf{X}=(X_{i,j})$ be a $3\times 6$ matrix of independent variables.
Consider the polynomial ring $P = R[\mathbf{X}]$ and the maximal ideal
$\n=(\m_R,\mathbf{X})P$, and set $A=P_{\n}$ with maximal ideal $\m=\n A$.
For $n=1,2,\ldots$ and $j=1,\ldots 6$ we 
consider the polynomial 
$f_{n,j}=X_{1,j}^n+X_{2,j}^n+X_{3,j}^n$.
An induction argument based on the contents of Example~\ref{ex0202}
shows that the polynomials $f_{n,j}$ satisfy the conditions
of Proposition~\ref{prop0201}.
\end{example}

\section*{Acknowledgments}

The authors are grateful to Phil Griffith for his thoughtful mentoring, and to the referee for helpful comments.


\providecommand{\bysame}{\leavevmode\hbox to3em{\hrulefill}\thinspace}
\providecommand{\MR}{\relax\ifhmode\unskip\space\fi MR }
\providecommand{\MRhref}[2]{%
  \href{http://www.ams.org/mathscinet-getitem?mr=#1}{#2}
}
\providecommand{\href}[2]{#2}

\end{document}